\newcommand{\mcm}[3]{\newcommand{#1}[#2]{{\ensuremath{#3}}}} 
\mcm{\tuple}{1}{\langle #1 \rangle}
\mcm{\name}{1}{\ulcorner #1 \urcorner}
\mcm{\Fbb}{0}{\mathbb{F}}
\mcm{\Nbb}{0}{\mathbb{N}}
\mcm{\Zbb}{0}{\mathbb{Z}}
\mcm{\Rbb}{0}{\mathbb{R}}
\mcm{\Cbb}{0}{\mathbb{C}}
\mcm{\Qbb}{0}{\mathbb{Q}}
\mcm{\Bcal}{0}{\cal B}
\mcm{\Ccal}{0}{\cal C}
\mcm{\Dcal}{0}{\cal D}
\mcm{\Ecal}{0}{\cal E}
\mcm{\Fcal}{0}{\cal F}
\mcm{\Gcal}{0}{\cal G}
\mcm{\Hcal}{0}{\cal H}
\mcm{\Ical}{0}{\cal I}
\mcm{\Lcal}{0}{\cal L}
\mcm{\Mcal}{0}{\cal M}
\mcm{\Ncal}{0}{\cal N}
\mcm{\Pcal}{0}{{\cal P}}
\mcm{\Scal}{0}{{\cal S}}
\mcm{\Tcal}{0}{{\cal T}}
\mcm{\Ucal}{0}{{\cal U}}
\mcm{\Vcal}{0}{{\cal V}}
\mcm{\Wcal}{0}{{\cal W}}
\mcm{\Mfrak}{0}{\mathfrak M}
\newcommand{\easy}{\nobreak\hfill$\square$\medskip}
\mcm{\restric}{0}{\upharpoonright}
\mcm{\upset}{0}{\uparrow}
\mcm{\onto}{0}{\twoheadrightarrow}
\mcm{\smallNbb}{0}{{\small \mathbb{N}}}
\DeclareMathOperator{\preop}{op}
\mcm{\op}{0}{^{\preop}}
\newtheorem{thm}{\sc{Theorem}}[section]
\newtheorem{rem}[thm]{\sc{Remark}}
\newtheorem{prop}[thm]{\sc{Proposition}}
\newtheorem{lem}[thm]{\sc{Lemma}}
\newtheorem{dfn}[thm]{\sc{Definition}}
\newtheorem{sublem}[thm]{\sc{Sublemma}}
\newtheorem{coroll}[thm]{\sc{Corollary}}
\newtheorem{question}[thm]{\sc{Open question}}
\newtheorem{eg}[thm]{\sc{Example}}
\newenvironment{example}{\begin{eg} \rm}{\end{eg}}
\newcommand{\itum}{\item[$\bullet$]}
\DeclareMathOperator{\supp}{supp}
\DeclareMathOperator{\fin}{fin}
\DeclareMathOperator{\co}{co}
\DeclareMathOperator{\dual}{dual}
\DeclareMathOperator{\Sp}{cl}
\title{Thin sums matroids and duality}
\author{Hadi Afzali \\ \texttt{hadiafzali@math.uni-hamburg.de}  \and Nathan Bowler \\   \texttt{N.Bowler1729@gmail.com}}
\begin{document}

\maketitle

\begin{abstract}
Thin sums matroids were introduced to extend the notion of representability to non-finitary matroids.  We give a new criterion for testing when the thin sums construction gives a matroid. We show that thin sums matroids over thin families are precisely the duals of representable matroids (those arising from vector spaces). We also show that the class of tame thin sums matroids is closed under duality and under taking minors, by giving a new characterisation of the matroids in this class. Finally, we show that all the matroids naturally associated to an infinite graph are tame thin sums matroids.
\end{abstract}

\section{Introduction}

If we have a family of vectors in a vector space over some field $k$, we get a matroid structure on that family whose independent sets are given by the linearly independent subsets of the family. Matroids arising in this way are called {\em representable} matroids. Although many interesting finite matroids (eg. all graphic matroids) are representable, it is clear that any representable matroid is finitary and so many interesting examples of infinite matroids are not of this type. However, since the construction of many of these examples, including the algebraic cycle matroids of infinite graphs, is suggestively similar to that of representable matroids, the notion of {\em thin sums matroids} was introduced in \cite{RD:HB:graphmatroids}: it is a generalisation of representability which captures these infinite examples.

The basic idea is to take the vector space to be of the form $k^A$ for some set $A$, and to allow the linear combinations involved in the definition of dependence to have nonzero coefficients at infinitely many vectors, provided that they are well defined pointwise, in the sense that for each $a \in A$ there are only finitely many nonzero coefficients at vectors with nonzero component at $a$. Further details are given in Section \ref{pre}. Thin sums matroids need not be finitary.

There are some obvious questions about how well-behaved the objects given by this definition are. The first, and most obvious, question is whether the systems of independent sets defined like this are all really infinite matroids (in the sense of \cite{matroid_axioms}). Sadly, it is known that there are examples of set systems definable this way which are not matroids. Accordingly, we refer to such systems in general as {\em thin sums systems}, and only call them thin sums matroids if they really are matroids.

\begin{question}\label{whenmat}
Which thin sums systems are matroids?
\end{question}

A sufficient condition is given in \cite{RD:HB:graphmatroids}: a thin sums system over a family of vectors in $k^A$ is always a matroid when this family is {\em thin} - that is, when for each $a \in A$ there are only finitely many vectors in the family whose component at $a$ is nonzero. We show that, just as every representable matroid is finitary, so also every thin sums matroid over a thin family is cofinitary. Thus to get examples which are neither finitary nor cofinitary new ideas are needed. In fact we prove something stronger.

\begin{thm}
A matroid arises as a thin sums matroid over a thin family for the field $k$  iff it is the dual of a $k$-representable matroid.
\end{thm}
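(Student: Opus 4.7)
The plan is to match both sides of the equivalence to a single algebraic datum: a subspace $W \subseteq k^{(E)}$ of finitely supported functions on the ground set. I claim that the circuits of a thin sums matroid built from a thin family are exactly the minimal supports of nonzero elements of $W^\perp \subseteq k^E$ for an appropriate $W$, and that the circuits of the dual of a representable matroid admit the same description. Matching the two will force equality.

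For a thin family $(v_e)_{e \in E}$ in $k^A$, define $w_a \in k^E$ by $w_a(e) = v_e(a)$. Thinness is exactly the statement that each $w_a$ is finitely supported, so $w_a \in k^{(E)}$; let $W$ be their $k$-linear span there. Pointwise in $k^A$, for $b \in k^E$ the equation $\sum_e b_e v_e = 0$ reduces to $\langle b, w \rangle = 0$ for every $w \in W$, so the circuits of $M$ are the minimal supports of nonzero elements of $W^\perp$. On the dual side, given a $k$-representation $\phi: k^{(E)} \onto V$ of a matroid $N$ and putting $W = \ker \phi$, the surjection yields an isomorphism $V^* \cong W^\perp$ via $\ell \mapsto \ell \circ \phi$, so the cocycle space of $N$ is $W^\perp$; the familiar description of cocircuits as minimal supports of nonzero cocycles then gives that the circuits of $N^*$ are also the minimal supports of nonzero elements of $W^\perp$.

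With these two computations in hand, both directions follow quickly. For the forward direction, start from a thin sums matroid $M$, form $W$ as above, and let $N$ be the representable matroid given by the quotient $k^{(E)} \onto k^{(E)}/W$; then $M$ and $N^*$ share their circuits and hence are equal. For the reverse direction, start from $M = N^*$ with $W = \ker \phi$, pick any spanning family $(d_a)_{a \in A}$ of $W$, and set $v_e(a) := d_a(e)$; the family is thin since $\{e : v_e(a) \neq 0\} = \supp(d_a)$ is finite, the previously established sufficient condition of \cite{RD:HB:graphmatroids} ensures that the resulting thin sums system really is a matroid, and the circuit calculation above identifies it with $N^* = M$. The main obstacle is the identification of cocircuits of $N$ with the minimal supports in $W^\perp$; this is classical over finite ground sets but for infinite $N$ requires a clean statement that the algebraic dual $(k^{(E)})^* = k^E$ is rich enough to recover the cocycle space from $\ker \phi$ via the isomorphism $V^* \cong W^\perp$, together with care that an infinite representable matroid still characterises its cocircuits as the supports of support-minimal nonzero cocycles.
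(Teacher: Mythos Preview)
Your proof is correct and shares its core with the paper's: your $W$ is exactly the paper's $C_\phi$ (the linear dependences, i.e.\ $\ker\tilde\phi$), your $W^\perp$ is exactly the paper's $D_f$ (the thin dependences), and your thin family $v_e(a) = d_a(e)$ built from a spanning set of $W$ is the paper's map $\widehat\phi$. The organisational difference is that the paper proves $M^*(\phi) = M_{ts}(\widehat\phi)$ by a direct bases-and-dependent-sets argument (Theorem~\ref{dualise}), from which it \emph{derives} as a corollary that thin families give matroids; you instead match circuits on both sides as minimal supports in $W^\perp$, which leads you to \emph{invoke} that result from \cite{RD:HB:graphmatroids}. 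This detour is avoidable: matching independent sets directly---$I$ is $M_{ts}$-independent iff no nonzero element of $W^\perp$ is supported on $I$, iff $E\setminus I$ is $N$-spanning, iff $I$ is $N^*$-independent---sidesteps the citation and recovers the paper's corollary for free. Your flagged obstacle, that cocircuits of an infinite representable $N$ are the minimal supports in $W^\perp$, does go through cleanly: the map $\ell \mapsto \ell\circ\tilde\phi$ gives $V^* \cong W^\perp$ once one assumes (harmlessly) that $\phi(E)$ spans $V$, and a subset $S \subseteq E$ is $N$-spanning iff $\phi(S)$ spans $V$, which holds for infinite $E$ exactly as for finite; since $N^*$ is a matroid, minimal codependent sets exist and coincide with minimal supports.
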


We will also provide a complete, if somewhat cumbersome, characterisation answering Question \ref{whenmat}. Although this characterisation allows us to simplify the proof of an old result of Higgs \cite{Higgs:axioms} characterising when the algebraic cycle system of a graph is a matroid, it is not completely satisfactory. The most nontrivial condition in the definition of matroids has not been removed. We will explore an analogy between thin sums systems and IE-operators which suggests that there is unlikely to be a simpler characterisation than the one we give.

The class of matroids representable over a field $k$ is very well behaved: It is closed under taking minors and (for finite matroids) under duality. This leads us naturally to ask the same questions about thin sums matroids.

\begin{question}
Is the class of thin sums matroids over $k$ closed under duality? Is it closed under taking minors?
\end{question}

The first part of this question is answered negatively in \cite{WILD}, where it is shown that there is a thin sums matroid whose dual is not a thin sums matroid. However, this counterexample is a very unusual matroid, in that it has a circuit and a cocircuit whose intersection is infinite. Such matroids are called {\em wild}, and matroids in which all circuit-cocircuit intersections are finite are called {\em tame}. Almost all standard examples of infinite matroids are tame - in fact, \cite{WILD} is the first paper to show that any matroid at all is wild. We are able to establish the following result:

\begin{thm}
The class of tame thin sums matroids over $k$ is closed under duality and under taking minors.
\end{thm}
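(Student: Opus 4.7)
The natural strategy is to first establish an intrinsic, manifestly self-dual characterisation of tame thin sums matroids, and then to deduce both closure statements from it. Under such a characterisation, closure under duality becomes automatic, and closure under minors reduces to closure under deletion via the identity $M/F = (M^* \setminus F)^*$.

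The characterisation I would aim for takes the following shape. Given a matroid $M$ on $E$, call a pair of thin families $(v_e)_{e \in E} \subseteq k^A$ and $(w_e)_{e \in E} \subseteq k^B$ a \emph{dual pair of representations} for $M$ if $(v_e)$ represents $M$ as a thin sums matroid and $(w_e)$ represents $M^*$ as a thin sums matroid. When $M$ is tame, the pairing $\sum_{e \in C \cap D} \lambda_e \mu_e$ over any circuit--cocircuit intersection $C \cap D$, for coefficient vectors $(\lambda_e)$ and $(\mu_e)$ coming from witnessing thin dependences, is automatically a finite sum and so makes sense; one expects to impose the orthogonality condition that it vanishes. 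The core technical content is then to prove: (i) every tame thin sums matroid admits such a dual pair, that is, starting from a thin sums representation of $M$ one can construct a thin sums representation of $M^*$, with tameness ensuring that the formal sums producing the dual vectors are pointwise well-defined; and (ii) conversely, any matroid admitting such a dual pair is a tame thin sums matroid, with tameness arising from the existence of the pairing.

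Given the characterisation, duality is immediate by interchanging the roles of the two families. For deletion, restricting $(v_e)$ and $(w_e)$ to $E \setminus F$ yields a dual pair of representations for $M \setminus F$, and tameness persists because every circuit of $M \setminus F$ is a circuit of $M$ and every cocircuit of $M \setminus F$ is contained in a cocircuit of $M$, so circuit--cocircuit intersections can only shrink. Contraction is then handled by $M/F = (M^* \setminus F)^*$, combining closure under deletion with closure under duality. The main obstacle will be step (i): given a tame thin sums representation of $M$, producing an explicit thin sums representation of $M^*$. This is the analogue of the classical construction that takes the kernel of a finite representing matrix, but in the infinite setting one must work hard to ensure that the resulting dual vectors really do form a thin family and that the thin dependences on them correspond exactly to the cocircuits of $M$. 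The tameness hypothesis is essential here, and I expect this to be the deepest part of the proof.
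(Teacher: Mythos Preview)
Your overall strategy---establish a self-dual characterisation, then read off both closure statements---is exactly the paper's. But the paper's characterisation is pitched differently, and the difference matters for the minor argument. Rather than asking for full thin sums representations of both $M$ and $M^*$, the paper (Lemma~\ref{char}) asks only for a function $c_o\colon o\to k^*$ on each circuit and $d_b\colon b\to k^*$ on each cocircuit, satisfying $\sum_{e\in o\cap b} c_o(e)d_b(e)=0$. Your direction (i) amounts to constructing the $d_b$ from a given thin sums representation, and the paper does this by fixing $e_b\in b$, choosing for each $e\in b-e_b$ a circuit $o(e)$ with $o(e)\cap b=\{e_b,e\}$, and setting $d_b(e)=-c_{o(e)}(e_b)/c_{o(e)}(e)$; orthogonality with an arbitrary circuit then follows by a short linear-combination argument together with Lemma~\ref{mindep}. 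This is indeed the core of the proof, as you anticipated.

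Your deletion step, however, contains a genuine error. Restricting $(w_e)$ to $E\setminus F$ gives a thin sums representation of $M^*\setminus F$, not of $(M\setminus F)^*=M^*/F$; these are different matroids in general (take $M=U_{1,2}$ and delete one element). So the restricted pair is \emph{not} a dual pair for $M\setminus F$. The route is easily repaired---deletion of a thin sums matroid is trivially a thin sums matroid by restricting only the primal family, and tameness persists for the reasons you give, so you never needed the restricted $(w_e)$---but as written the claim is false. The paper sidesteps this entirely: with the $c_o,d_b$ formulation it handles an arbitrary minor $N=M/C\setminus D$ in one stroke, lifting each $N$-circuit $o$ to an $M$-circuit $\hat o$ with $o\subseteq\hat o\subseteq o\cup C$ and each $N$-cocircuit $b$ to $\hat b$ with $b\subseteq\hat b\subseteq b\cup D$, and taking $c_o=c_{\hat o}\restriction_o$, $d_b=d_{\hat b}\restriction_b$. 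Since $C$ and $D$ are disjoint, $\hat o\cap\hat b=o\cap b$, so orthogonality transfers directly. This is what the circuit/cocircuit packaging buys you over full representations. (A small terminological point: in this paper ``thin family'' has the specific meaning that each coordinate is nonzero only finitely often, which would force the matroid to be cofinitary; you presumably just mean a family in $k^A$.)
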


We do this by giving an alternative characterisation of tame thin sums matroids for which this good behaviour is far more transparent.

Any finite graphic matroid is representable over every field. The situation for infinte graphs is a little more complex, in that there is more than one natural way to build a matroid from an infinite graph. In \cite{RD:HB:graphmatroids}, 6 matroids associated to a graph are defined in 3 dual pairs. We show that all 6 of these matroids are thin sums matroids over any field (this was already known for 1 of the 6, and one of the others was already known to be representable). 

In Section \ref{pre}, we will introduce some of the basic concepts, such as matroids, representability, and thin sums. We will also introduce the 6 graphic matroids mentioned above. Section \ref{corep} will be devoted to thin sums matroids on thin families, and their duality with representable matroids. In Section \ref{cond} we will develop our criterion for when a thin sums system is a matroid, and in Section \ref{galois} we will explain why we think there is unlikely to be a simpler characterisation. In Section \ref{duality} we will prove that the class of tame thin sums matroids is closed under duality and taking minors. Our account of why the various matroids associated to an infinite graph are thin sums matroids will be dispersed over all these sections: we give a summary of this aspect of the theory in Section \ref{summary}.

\section{ Preliminaries}\label{pre}

In this section, we introduce some terminology and concepts that we will use later. We also prove a few simple results about representability.

For any set $E$ let $\Pcal(E)$ be the power set of $E$. Recall that a matroid $M$ consists of a set $E$ (the ground set) and a set $\Ical \subseteq \Pcal(E)$
(the set of its independent sets), where $\Ical$ satisfies the following conditions: 

(I1) $\emptyset\in \Ical$.

(I2) $\Ical$ is closed under taking subsets.

(I3) For all $I\in \Ical\setminus \Ical^{max}$ and $I'\in \Ical^{max}$, there is an $x\in I' \setminus I$ such that $I+x\in \Ical$.

(IM) Whenever $I\subseteq X \subseteq E$ and $I\in \Ical$, the set $\{I' \in \Ical | I\subseteq I'\subseteq X\}$ has a maximal element.

Subsets of the ground set which are not independent are called {\em dependent}, and minimal dependent sets are called {\em  circuits} of the matroid. Maximal independent sets are called {\em bases}.
All the information about the matroid is contained in the set of its circuits, or of its bases. Further details can be found in \cite{matroid_axioms}, from which we take much of our notation.
If all the circuits of a matroid $M$ are finite then $M$ is called finitary. We use $M^*$ to denote the dual of $M$.

For any base $B$ and any $e \in E \setminus B$, there is a unique circuit $o_e$ with $e \in o_e \subseteq B + e$, called the {\em fundamental circuit} of $e$ with respect to $B$. Dually, since $E \setminus B$ is a base of $M^*$, for any $f \in B$ there is a unique cocircuit $b_f$ with $f \in b_f \subseteq E \setminus B + f$, called the {\em fundamental cocircuit} of $f$.

\begin{lem}\label{notone}
There is no matroid $M$ with a circuit $o$ and a cocircuit $b$ such that $|o \cap b| = 1$.
\end{lem}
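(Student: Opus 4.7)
The plan is to derive a contradiction from the assumption that $o \cap b = \{e\}$ for some element $e$, by exhibiting a base of $M$ that is disjoint from the cocircuit $b$.

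\textbf{Step 1: Find a base $B$ with $B \cap b = \{e\}$.} Since $b$ is a cocircuit, $E \setminus b$ contains no base of $M$ (equivalently, $b$ is dependent in $M^{*}$, so meets every base of $M$). On the other hand, $b - e$ is a proper subset of the circuit $b$ of $M^{*}$ and is therefore independent in $M^{*}$; translating back to $M$, this says that $(E \setminus b) \cup \{e\}$ contains a base $B$. Any such base must in fact contain $e$, for otherwise it would sit inside $E \setminus b$, and it must have $B \cap b = \{e\}$.

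\textbf{Step 2: Extend $o - e$ to a base $B'$ inside $(o-e) \cup B$.} Because $o$ is a circuit, $o - e \in \Ical$. Using (IM), pick a maximal element $B'$ of $\{I \in \Ical : (o-e) \subseteq I \subseteq (o-e) \cup B\}$. A standard application of (I3), with $B'$ in the role of $I$ and the base $B$ in the role of $I'$, shows $B'$ must be a base of $M$: if not, (I3) would give an $x \in B \setminus B'$ with $B' + x \in \Ical$, violating maximality of $B'$ in $(o-e) \cup B$.

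\textbf{Step 3: Derive the contradiction.} If $e$ lay in $B'$, then the circuit $o = (o-e) + e$ would sit inside the independent set $B'$, which is absurd. Hence $e \notin B'$, so
\[ B' \subseteq \bigl((o-e) \cup B\bigr) \setminus \{e\} = (o-e) \cup (B - e). \]
Now $(o-e) \cap b = \emptyset$ by the hypothesis on $o \cap b$, and $(B-e) \cap b = \emptyset$ by Step~1, so $B' \cap b = \emptyset$. But $B'$ is a base and $b$ is a cocircuit, contradicting the fact established at the start of Step~1 that every base meets $b$.

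The argument is essentially routine once one has Step~1; the only genuinely delicate point is the use of the infinite-matroid axioms (IM) and (I3) in Step~2 to extend $o - e$ to a base within the prescribed set $(o-e) \cup B$, rather than freely in $E$. No appeal to duality (beyond the definition of a cocircuit) or to fundamental circuits is required.
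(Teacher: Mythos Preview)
Your proof is correct and follows essentially the same strategy as the paper's: both begin by finding a base $B$ with $B \cap b = \{e\}$, then use (IM) to extend $o - e$ maximally inside a suitable set and invoke (I3) against $B$ to force a contradiction involving $e$ and the circuit $o$. The only cosmetic difference is that the paper maximises inside $E \setminus b$ (obtaining a non-base whose (I3)-extension by an element of $B$ must be $e$, making $o$ independent), whereas you maximise inside $(o-e) \cup B$ (obtaining a base which cannot contain $e$ and therefore misses $b$); these are two sides of the same coin.
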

\begin{proof}
Suppose for a contradiction that there were such an $M$, $o$ and $b$, with $o \cap b = \{e\}$. Let $B$ be a base of $M$ whose complement includes the coindependent set $b - e$. Let $I$ be a maximal independent set with $o - e \subseteq I \subseteq E \setminus b$ - this can't be a base of $M$ since its complement includes $b$, so by (I3), there is some $f \in B \setminus I$ such that $I + f$ is independent. Then by maximality of $I$, $f \in b$, and so $f = e$, so $o$ is independent. This is the desired contradiction.
\end{proof}

\begin{lem}\label{fdt}
 Let $M$ be a matroid and $B$ be a base.
Let $o_e$ and $b_f$ a fundamental circuit and a fundamental cocircuit with respect to $B$, then
\begin{enumerate}
 \item $o_e\cap b_f$ is empty or $o_e\cap b_f=\{e,f\}$ and
\item $f\in o_e$ iff $e\in b_f$.
\end{enumerate}
\end{lem}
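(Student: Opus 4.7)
The plan is to derive both claims from the containments defining the fundamental circuit and cocircuit, together with Lemma \ref{notone}.

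First I would analyse $o_e \cap b_f$ using only the inclusions $o_e \subseteq B + e$ and $b_f \subseteq (E \setminus B) + f$. An element of the intersection must lie in $(B + e) \cap ((E\setminus B) + f)$. Since $e \notin B$ and $f \in B$, an element of $B + e$ is either in $B$ or equal to $e$, while an element of $(E\setminus B) + f$ is either in $E \setminus B$ or equal to $f$. Case-checking the four possibilities, the only surviving elements are $e$ and $f$, so $o_e \cap b_f \subseteq \{e,f\}$.

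Next I would show that the intersection cannot have cardinality exactly one. This is immediate from Lemma \ref{notone}: if $|o_e \cap b_f| = 1$ we would have a circuit meeting a cocircuit in a single element, which is impossible. Hence $|o_e \cap b_f| \in \{0, 2\}$, giving part (1): either $o_e \cap b_f = \emptyset$ or $o_e \cap b_f = \{e, f\}$.

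Part (2) then falls out with no further work. Since $e \in o_e$ and $f \in b_f$ always, we have $f \in o_e$ iff $\{e,f\} \subseteq o_e \cap b_f$ iff (by part (1)) $o_e \cap b_f = \{e,f\}$ iff $e \in b_f$. The only substantive ingredient is the prior lemma; the rest is a direct set-theoretic unpacking, so I do not anticipate any genuine obstacle.
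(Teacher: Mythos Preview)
Your proof is correct and follows essentially the same approach as the paper: establish $o_e \cap b_f \subseteq \{e,f\}$ from the defining inclusions, invoke Lemma~\ref{notone} to exclude the singleton case, and read off part (2) from part (1). The paper's proof is terser but identical in substance.
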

\begin{proof}
(1) is immediate from Lemma \ref{notone} and the fact that $o_e \cap b_f \subseteq \{e, f\}$. (2) is a straightforward consequence of (1).
\end{proof}

\begin{lem}\label{o_cap_b}
 For any circuit $o$,  and any elements $e, f$ of $o$ there is a cocircuit $b$ such that $o\cap b=\{e,f\}$.
\end{lem}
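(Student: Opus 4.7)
The plan is to find a base $B$ with respect to which $e$'s fundamental cocircuit is the cocircuit we want. First note that if $e=f$ the result would contradict Lemma \ref{notone}, so we may assume $e \neq f$. Then $o - f$ is a proper subset of the circuit $o$, hence independent, and applying (IM) with $I = o - f$ and $X = E$ yields a maximal independent set $B \supseteq o - f$, which must be a base (any strictly larger independent set would still contain $o - f$, violating maximality). Crucially $f \notin B$: otherwise $o \subseteq B$ would give an independent circuit.

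Now let $b := b_e$ be the fundamental cocircuit of $e$ with respect to $B$, which exists because $e \in o - f \subseteq B$. Then $b \subseteq (E \setminus B) + e$, so $b \cap B = \{e\}$. On the other hand, the elements of $o$ split into $o \cap B = o - f$ and $o \setminus B = \{f\}$, so
\[
o \cap b = \bigl((o - f) \cap b\bigr) \cup \bigl(\{f\} \cap b\bigr) \subseteq \{e\} \cup \{f\}.
\]
Since $e \in o \cap b$, this intersection has size $1$ or $2$. By Lemma \ref{notone} it cannot have size $1$, so $o \cap b = \{e, f\}$, as required.

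There is no real obstacle here: the argument is essentially the same trick that proves Lemma \ref{fdt}, and once one recognises that $b_e$ is the natural candidate cocircuit the inclusion $o \cap b_e \subseteq \{e,f\}$ drops out of the fact that $b_e$ meets $B$ only at $e$ while $o$ meets $E \setminus B$ only at $f$. The one subtlety worth flagging is the use of (IM) rather than a naive Zorn-style extension, since we are in the infinite matroid setting.
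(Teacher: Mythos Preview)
Your proof is correct and follows essentially the same approach as the paper's: extend the circuit minus one element to a base and take the fundamental cocircuit of the element that landed in the base. The only cosmetic difference is that the paper extends $o \setminus e$ (so $f \in B$) and then invokes Lemma~\ref{fdt} for the fundamental cocircuit $b_f$, whereas you extend $o - f$ (so $e \in B$), take $b_e$, and unpack the inclusion $o \cap b_e \subseteq \{e,f\}$ by hand before appealing to Lemma~\ref{notone}; these are the same argument with $e$ and $f$ swapped.
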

\begin{proof}
Let $B$ be a base extending the independent set $o \setminus e$, so that $o$ is the fundamental circuit of $e$ with respect to $B$. Then the fundamental cocircuit of $f$ has the desired property.
\end{proof}

\begin{lem} \label{rest_cir}
 Let $M$ be a matroid with ground set $E = C \dot \cup X \dot \cup D$ and let $o'$ be a circuit of $M' = M / C \backslash D$.
Then there is an $M$-circuit $o$ with $o' \subseteq o \subseteq o' \cup C$.
\end{lem}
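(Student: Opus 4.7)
The plan is to lift the $M'$-circuit $o'$ to an $M$-circuit via the standard description of contraction in infinite matroids. Using (IM), fix a maximal independent subset $J \subseteq C$ of $C$ in $M$; then a set $I \subseteq E \setminus C$ is independent in $M/C$ iff $I \cup J$ is independent in $M$, and deletion of $D$ merely restricts the ground set without affecting which subsets of $X$ are independent.

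Since $o'$ is a circuit of $M' = M/C \setminus D$, we have $o' \subseteq X$ and $o'$ is dependent in $M/C$. By the characterisation above, $o' \cup J$ is dependent in $M$, so it contains some $M$-circuit $o$. Automatically $o \subseteq o' \cup J \subseteq o' \cup C$, and because both $o'$ and $C$ are disjoint from $D$, so is $o$.

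The only step requiring real work is the inclusion $o' \subseteq o$, which I expect to be the main obstacle. Suppose for a contradiction that some $e \in o'$ lies outside $o$. Minimality of $o'$ as an $M/C$-circuit forces $o' - e$ to be independent in $M/C$, hence $(o' - e) \cup J$ is independent in $M$. But $o \subseteq o' \cup J$ together with $e \notin o$ yields $o \subseteq (o' - e) \cup J$, contradicting that $o$ is an $M$-circuit. This completes the argument; the only subtle ingredient is the use of (IM) to obtain $J$, which is exactly what distinguishes the infinite case from the finite one.
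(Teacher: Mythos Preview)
Your proof is correct and essentially identical to the paper's. The paper also takes a base $B$ of $M\restric_C$ (your $J$), notes that $B \cup o'$ is $M$-dependent while $B \cup (o'-e)$ is $M$-independent for each $e \in o'$, and concludes that the $M$-circuit $o \subseteq B \cup o'$ must contain every element of $o'$.
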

\begin{proof}
Let $B$ be any base of $M \restric_C$. Then $B \cup o'$ is $M$-dependent since $o'$ is $M'$-dependent.
On the other hand,  $B \cup o'-e$ is $M$-independent whenever $e\in o'$ since $o'-e$ is $M'$-independent.
Putting this together yields that $B \cup o'$ contains an $M$-circuit $o$, and this circuit must not avoid any $e\in o'$, as desired.
\end{proof}

\begin{coroll} \label{dualops}
Let $M$ be a matroid with ground set $E = C \dot \cup \{x\} \dot \cup D$. Then either there is a circuit $o$ of $M$ with $x \in o \subseteq C + x$ or there is a cocircuit $b$ of $M$ with $x \in b \subseteq D + x$, but not both. 
\end{coroll}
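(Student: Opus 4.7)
The plan is to apply Lemma~\ref{rest_cir} to a carefully chosen one-element minor, together with Lemma~\ref{notone} to rule out the simultaneous existence of both configurations.

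For existence, I would consider the minor $M' = M / C \backslash D$, whose ground set is just $\{x\}$. Every matroid on a single-element ground set has that element as either its unique circuit (a loop) or its unique cocircuit (a coloop). In the loop case, Lemma~\ref{rest_cir} applied to $M$ with the circuit $o' = \{x\}$ of $M'$ immediately yields an $M$-circuit $o$ with $x \in o \subseteq C + x$, as desired. In the coloop case, $\{x\}$ is a circuit of the dual $(M')^* = M^* \backslash C / D$, so applying Lemma~\ref{rest_cir} instead to $M^*$ (with the roles of $C$ and $D$ interchanged) produces an $M^*$-circuit, i.e.\ an $M$-cocircuit, $b$ with $x \in b \subseteq D + x$.

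For the uniqueness part (``not both''), I would argue by contradiction: if such $o$ and $b$ both existed, then
\[
o \cap b \subseteq (C + x) \cap (D + x) = \{x\},
\]
and since $x$ lies in both, $o \cap b = \{x\}$. This directly contradicts Lemma~\ref{notone}.

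I don't expect a serious obstacle here — the argument is essentially a bookkeeping exercise once one recognises the right one-element minor to look at. The only point worth care is the duality computation $(M/C \backslash D)^* = M^* \backslash C / D$, which justifies re-using Lemma~\ref{rest_cir} on the dual side in the coloop case; everything else follows mechanically from the two lemmas just proved.
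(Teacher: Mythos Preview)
Your proof is correct and follows the same approach as the paper: look at the one-element minor $M/C\backslash D$, invoke the duality identity for minors, and apply Lemmas~\ref{rest_cir} and~\ref{notone}. The paper's proof is just a one-line sketch of exactly this argument.
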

\begin{proof}
Note that $(M/D\backslash C)^* = M ^*/ C \backslash D$, and apply Lemmas \ref{notone} and \ref{rest_cir}. 
\end{proof}

In \cite{source_of_IE}, a {\em space} is defined to consist of a set $E$ together with an operator $\Pcal E \xrightarrow{S} \Pcal E$ such that $S$ preserves the order $\subseteq$ and satisfies $X \subseteq SX$ for any $X \subseteq E$. For example, for any matroid $M$ with ground set $E$ the associated {\em closure operator} $\Sp_M$, which sends $X$ to the set $$X \cup \{x \in E | (\exists o \in \Ccal(M)) x \in o \subseteq X + x\}$$ gives a space on the set $E$.

If $(E, S)$ is a space, the {\em dual space} is given by $(E, S^*$), where $S^*$ is the {\em dual operator} to $S$, sending $X$ to $X \cup \{x \in E | x \not \in S(E \setminus (X + x))$. Thus for sets $X$ and $Y$ with $X \dot \cup Y \dot \cup \{x\} = E$, we have
\begin{equation}\label{duop}
x \in SX \iff x \not \in S^*Y\, ,\tag{$\dagger$}
\end{equation}
and this completely determines $S^*$ in terms of $S$. Thus $S^{**} = S$. Also, by Corollary \ref{dualops}, for any matroid $M$ we have $\Sp_{M^*} = \Sp_M^*$.

For a space $(S, E)$, we say $S$ is {\em idempotent} if $S^2 = S$, and {\em exchange} if $S^*$ is idempotent. If $S$ is both idempotent and exchange, we call it an {idempotent-exchange} operator, or an {\em IE-operator} on $E$. Note that if $S$ is an IE-operator then so is $S^*$. For any matroid $M$ the operator $\Sp_M$ is an IE-operator. On the other hand, there are lots of IE-operators that don't come from matroids in this way. Some strong condition akin to IM is needed to pick out which IE-operators correspond to matroids.

We always use $k$ to denote an arbitrary field. The capital letter $V$ always stands for a vector space over $k$. For any set $A$, we write $k^A$ to denote the set of all functions from $A$ to $k$. For any function $E \xrightarrow{d} k$ the {\em support} $\supp(d)$ of $d$ is the set of all elements $e\in E$ such that $d(e)\neq 0$. {\em A linear dependence} of $E \xrightarrow{\phi} V$ is a map $E \xrightarrow{c} k$ such that 
$$\sum _{e\in E}c(e)\phi(e)=0$$
(here, as in the rest of this paper, we take this statement as including the claim that the sum is well-defined, i.e. that only finitely many summands are nonzero). For a subset $E'$ of $E$, we say such a $c$ is a {\em linear dependence} of $E'$ iff it is zero outside $E'$. Recall that a representable matroid is traditionally defined as follows.
\begin{dfn}
Let $V$ be a vector space. Then for any function $E \xrightarrow{\phi} V$ we get a matroid $M(\phi)$ on the ground set $E$, where we take a subset $E'$ of $E$ to be independent iff there is no nonzero linear dependence of $E'$. Such a matroid is called a representable or vector matroid.
\end{dfn}
Note that this is exactly the same as taking a family of vectors as the ground set and saying that a subfamily of this family is independent iff it is linearly independent. 

In \cite{RD:HB:graphmatroids}, there is an extension of these ideas to a slightly different context. Suppose now that we have a function $E \xrightarrow{f} k^A$. A {\em thin dependence} of $f$ is a map $E \xrightarrow{c} k$ such that, for each $a \in A$, 
$$\sum _{e\in E}c(e)f(e)(a)=0$$
This is not quite the same as a linear dependence (in $k^A$ considered as a vector space over $k$), since it is possible that the sum above might be well defined for each particular $a$ in $A$, but the sum
$$\sum_{e \in E} c(e)f(e)$$
might still not be well defined. To put it another way, there might be infinitely many $e \in E$ such that there is {\em some} $a \in A$ with $c(e)f(e)(a) \neq 0$, even if there are only finitely many such $e$ for each {\em particular} $a \in A$. We may also say $c$ is a thin dependence of a subset $E'$ of $E$ if it is zero outside of $E'$.

The word {\em thin} above originated in the notion of a {\em thin family} - this is an $f$ as above such that sums of the type given above are always defined; that is, for each $a$ in $A$, there are only finitely many $e \in E$ so that $f(e)(a) \neq 0$. Notice that, for any $E \xrightarrow{f} k^A$, and any thin dependence $c$ of $f$, the restriction of $f$ to the support of $c$ is thin.

Now we may define thin sums systems. 

\begin{dfn} 
Consider a family $E \xrightarrow{f} k^A$ of functions and declare a subset of $E$ as independent iff there is no nonzero thin dependence of that subset. Let $M_{ts}(f)$ be the set system with ground set $E$ and the set of all independent sets given in this way. We call $M_{ts}(f)$ the {\em thin sums system} corresponding to $f$. Whenever $M_{ts}(f)$ is a matroid it is called a thin sums matroid.
\end{dfn}

Note that every  dependent set in a representable matroid or thin sums system induces a linear or thin dependence and vice versa; therefore, we normally talk about such dependences instead of dependent sets.

Not every thin sums system is a matroid%
\footnote{See Section \ref{cond} for a couple of examples.} but it is known that if $f$ is thin then $M_{ts}(f)$ always is a matroid. The existing proof for this is technical and we shall not review it here. However, this fact will follow from the results in Section \ref{corep}. Next we explore the connection between representable and thin sums matroids. Recall that for any infinite matroid $M$ all its finite circuits are circuits of a matroid%
\footnote {This is easy to prove. See \cite{union2}.}, called the {\em finitarisation} of $M$.  

\begin{prop}\label{finthinrep}
For any thin sums matroid $M_{ts}(f)$, the finitarisation of $M_{ts}(f)$ is a representable matroid.
\end{prop}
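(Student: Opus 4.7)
The plan is to identify the finitarisation of $M_{ts}(f)$ directly with the representable matroid $M(\phi)$, where $\phi: E \to k^A$ is nothing other than $f$, but with $k^A$ now viewed as a $k$-vector space in the usual way. Since the finitarisation is determined by its (finite) circuits, it suffices to show that $M(\phi)$ has exactly the same finite circuits as $M_{ts}(f)$.

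First I would make the key observation that, for a map $c: E \to k$ of \emph{finite} support, being a thin dependence of $f$ coincides with being a linear dependence of $\phi$. Indeed, when $\supp(c)$ is finite the sum $\sum_{e} c(e)\phi(e)$ is a well-defined finite sum in $k^A$, and it equals $0$ as a vector iff each of its components $\sum_e c(e)f(e)(a)$ vanishes. Consequently, a finite subset $E'\subseteq E$ carries a nonzero thin dependence iff it carries a nonzero linear dependence; equivalently, $E'$ is $M_{ts}(f)$-independent iff it is $M(\phi)$-independent.

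Next I would invoke the (easy) fact that every representable matroid is finitary, so every circuit of $M(\phi)$ is finite. By the previous paragraph, the finite dependent sets of $M_{ts}(f)$ and $M(\phi)$ agree, so the minimal such sets agree as well; note that a minimal dependent subset \emph{among finite sets} of $M_{ts}(f)$ is automatically a circuit of $M_{ts}(f)$, because any proper subset is finite and so its being independent in $M_{ts}(f)$ rules out any thin dependence, not merely a finitely supported one. Thus the circuits of $M(\phi)$ are exactly the finite circuits of $M_{ts}(f)$.

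Since the finitarisation of $M_{ts}(f)$ is by definition the matroid whose circuits are the finite circuits of $M_{ts}(f)$, it must coincide with $M(\phi)$ and is therefore representable. There is really no main obstacle here: the proof is little more than the remark that for finite supports, ``thin'' and ``linear'' dependence are literally the same notion. The only point requiring a moment of care is the matching of minimality, i.e. ruling out the possibility that a finite circuit of $M_{ts}(f)$ has a proper subset carrying an infinitely supported thin dependence — which is impossible, as proper subsets of a finite set are finite.
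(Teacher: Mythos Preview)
Your proof is correct and follows exactly the same approach as the paper: both arguments rest on the single observation that a thin dependence of $f$ with finite support is the same thing as a linear dependence of $f$ viewed as a family of vectors in $k^A$, and conversely every linear dependence of $f$ (necessarily finitely supported) is a thin dependence. The paper states only this observation and leaves the reader to draw the conclusion; you have spelled out the identification of finite circuits and the minimality check explicitly, but there is no substantive difference in method.
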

\begin{proof}
For any family $E \xrightarrow{f} k^A$ of functions, a thin dependence of $f$ with finite support is also a linear dependence of $f$ as a family of vectors, and conversely any linear dependence of $f$ as a family of vectors is a thin dependence of $f$. 
\end{proof}

Now let's try to answer to the question: Which matroids arising from graphs are representable or thin sums matroids? It is easy to see that any algebraic cycle matroid is a thin sums matroid (in fact, this was one motivation for the definition of thin sums matroids). Recall that for any graph $G$ which does not contain a subdivision of the Bean graph,
$$\xymatrix{&&&& \bullet \ar@{-}[d] \ar@{-}[dr] \ar@{-}[drr] \ar@{-}[drrr] \ar@{}[drrrr]|{\cdots} &&&&\\ \cdots & \ar[l] \bullet \ar@{-}[r] & \bullet \ar@{-}[r] & \bullet \ar@{-}[r] & \bullet \ar@{-}[r] & \bullet \ar@{-}[r] & \bullet \ar@{-}[r] & \bullet \ar[r]& \cdots \\}$$
the edge sets of cycles and double rays of $G$ are circuits of a matroid%
\footnote{This has been proved in \cite{Higgs:axioms}. However, later we will be able to give a simpler proof of this.} $M_A(G)$ on the edge set of $G$ which is called the {\em algebraic cycle matroid} of $G$. In fact, even when $G$ does contain a subdivision of the Bean graph we shall still denote this system of sets by $M_A(G)$, and call it the {\em algebraic cycle system} of $G$.

\begin{prop}\label{algthin}
For any graph $G$ the algebraic cycle system of $G$ is a thin sums system over every field.
\end{prop}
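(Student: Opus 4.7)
The plan is to exhibit an explicit thin family $f: E(G) \to k^{V(G)}$ whose thin sums system equals $M_A(G)$. Fix an arbitrary orientation of the edges of $G$ and set $A = V(G)$. For an edge $e$ oriented from $u$ to $v$, define $f(e)(v) = 1$, $f(e)(u) = -1$, and $f(e)(w) = 0$ for every other vertex $w$. With this setup, a map $c: E(G) \to k$ is a thin dependence of $f$ iff at each vertex $v$ only finitely many edges incident to $v$ carry a nonzero value of $c$ and, with that finite sum, $\sum_{e \text{ into } v} c(e) = \sum_{e \text{ out of } v} c(e)$. That is, thin dependences are exactly the Kirchhoff flows on $G$ (with locally finite support).

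For the easy direction, I would take any $M_A$-dependent edge set $E'$, which by definition contains a cycle or a double ray $R$. Traverse $R$ in one of the two consistent directions and assign $c(e) = +1$ or $c(e) = -1$ according as the chosen direction on $R$ agrees or disagrees with the fixed orientation of $e$. At every vertex incident to $R$, exactly two edges of $R$ are incident, contributing $+1$ and $-1$ to the flow at that vertex; this gives a nonzero thin dependence supported in $R \subseteq E'$, so $E'$ is $M_{ts}(f)$-dependent.

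The other direction is the main content. Suppose $c \neq 0$ is a thin dependence of $f$ supported in $E'$; let $S = \supp(c)$ and let $H$ be the subgraph of $G$ on the edge set $S$ together with its incident vertices. The thinness of the dependence makes $H$ locally finite, and the flow condition at each $v \in V(H)$ forbids $v$ from being incident to exactly one edge of $S$, because then the $v$-coordinate of the sum would have a single nonzero summand $\pm c(e) \neq 0$. Thus $H$ is a locally finite graph with minimum degree at least $2$. The key graph-theoretic lemma I would then invoke is that every such $H$ contains a finite cycle or a double ray: if $H$ is acyclic, pick any edge $uv$ in a component, delete it to split that component into two subtrees $T_u$, $T_v$, and observe that each $T_i$ must be infinite, since a finite tree with at least two vertices has at least two leaves while in $T_u$ every vertex except possibly $u$ still has degree at least $2$; König's lemma then supplies rays from $u$ in $T_u$ and from $v$ in $T_v$ which concatenate across $uv$ into a double ray. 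Consequently $S \subseteq E'$ contains a cycle or double ray, so $E'$ is $M_A$-dependent.

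The only nontrivial step is the graph-theoretic lemma at the end; everything else is routine once the representation $f$ is written down. One minor subtlety worth a sentence in the write-up is that the graph-theoretic lemma genuinely uses the hypothesis (on $G$ not being the Bean graph subdivision only matters for the question of whether $M_A(G)$ is a matroid — here we are only claiming it is a thin sums \emph{system}, so the argument works unconditionally).
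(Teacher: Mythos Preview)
Your proof is correct and follows exactly the paper's approach: the same signed-incidence map $f:E(G)\to k^{V(G)}$, the same $\pm 1$ traversal flow for the forward direction, and for the converse the observation that the support of a nonzero thin dependence has minimum degree at least $2$ and hence contains a cycle or a double ray. You supply more detail than the paper does---the local-finiteness remark and the K\"onig's-lemma argument for finding the double ray---where the paper simply says ``it is not difficult to see,'' but the route is identical.
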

\begin{proof}
First we give an arbitrary orientation to every edge of $G$, making $G$ a digraph. For any edge $e$ of $G$ define a function $V(G) \xrightarrow{f(e)} k$ where for any $v\in V(G)$ $f(e)(v)$ is $1$ if $e$ originates from $v$, $-1$ if it terminates in $v$, and $0$ if they don't meet each other. We show that $D$ is dependent in $M_A(G)$ iff it is dependent in $M_{ts}(f)$. If $D$ is dependent in $M_A(G)$, then it contains a cycle or a double ray. Let $D'\subseteq D$ be the edge set of this cycle or double ray. Give a direction to $D'$. For any edge $e\in D$, define $c(e)$ to be $1$ if $e$ is an edge of $D'$ and they have the same directions, $-1$ if $e$ is in $D'$ and they have different directions, and $0$ if they don't meet. Now clearly we have $\sum_{e \in D'} c(e)f(e)(v)=0$ for any vertex $v$ of $G$, so $c$ is a thin dependence of $D$. Conversely if $D$ is dependent in $M_{ts}(f)$, then whenever a vertex $v$ is an end of an edge in $D$, it has to be the end of at least two edges in $D$. Now it is not difficult to see that $D$ has to contain a cycle or a double ray.
\end{proof}

Recall that the edge sets of finite cycles give the circuits of a matroid $M_{FC}(G)$, the {\em finite cycle matroid} of $G$. An argument almost identical to the one above shows that this matroid is always representable. Dually, the edge sets of finite bonds give the circuits of a matroid $M_{FB}(G)$, the {\em finite bond matroid} of $G$%
\footnote{See \cite{RD:HB:graphmatroids} for a description of the various cycle and bond matroids which may be associated to a graph.}. Similar ideas allow us to show that for any graph $G$, $M_{FB}(G)$ is also representable. 

\begin{prop}\label{fbond}
For any graph $G$ $M_{FB}(G)$ is representable over every field $k$.
\end{prop}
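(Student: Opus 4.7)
The plan is to dualise the construction used in the proof of Proposition~\ref{algthin}: there the representation was indexed by vertices and the thin dependences corresponded to cycles and double rays, whereas here I will index by finite cycles of $G$, aiming for the finite linear dependences to correspond to the finite bonds.

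Orient $G$ arbitrarily and, for each finite cycle $C$ of $G$, fix a cyclic orientation. For an edge $e$ and a finite cycle $C$, let $\epsilon_C(e) \in \{-1,0,+1\}$ be zero unless $e \in C$, and otherwise record whether the orientation of $e$ agrees with that of $C$. Let $\Ccal_{fin}$ be the set of oriented finite cycles of $G$, and define $f\colon E(G) \to k^{\Ccal_{fin}}$ by $f(e)(C) = \epsilon_C(e)$. The main claim is that $M(f) = M_{FB}(G)$, which gives the result. For the easy direction, if $B = E_G(X, V\setminus X)$ is a finite bond, let $c_B(e) \in \{-1,0,+1\}$ record its signed crossings of $X$. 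For any finite cycle $C$, the sum $\sum_{e}c_B(e)\epsilon_C(e)$ is the signed count of crossings of $B$ during a closed traversal of $C$, which vanishes since $C$ ends on the same side of $X$ as it began. Hence $c_B$ is a linear dependence of $f$, witnessing that $B$ is dependent in $M(f)$.

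For the converse, suppose a finite $D \subseteq E$ supports a nonzero linear dependence $c$ of $f$; I must show $D$ contains a finite bond. Contract away everything outside $D$: let $H$ be the multigraph with edge set $D$ whose vertices are the $(E\setminus D)$-connected components of $V(G)$. Only finitely many of these components meet $D$, so $H$ is finite up to isolated vertices. Every cycle of $H$ lifts to a finite cycle of $G$ by replacing each traversal of a contracted component by an internal finite path, so the dependence condition on $c$ implies $c$ is orthogonal to the cycle space of $H$. By the standard cycle-bond duality for finite graphs, $c$ then lies in the span of signed bond indicators of $H$, and since minimal nonzero supports in this span are precisely the bonds of $H$, $\supp(c) \subseteq D$ contains a bond $B_H$ of $H$. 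Lifting the underlying vertex bipartition of $V(H)$ back to $V(G)$ (by taking unions of the corresponding components and absorbing each isolated component into either side) yields a bipartition of $V(G)$ whose edge boundary is exactly $B_H$, since no edge in $E \setminus D$ can cross a side of the bipartition (its endpoints lie in a single component). Hence $B_H$ is a finite cut of $G$ contained in $D$, and any finite nonempty cut contains a finite bond (isolate a single connected component of one side and iterate), completing the argument.

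The main obstacle is handling the contraction correctly: one must verify that cycles of $H$ lift to finite cycles of $G$ (using connectedness of each $(E \setminus D)$-component) and that bonds of $H$ correspond to finite cuts of $G$ with the same edge set (using that $(E \setminus D)$-edges cannot cross the lifted bipartition). Isolated vertices of $H$ have to be absorbed into either side of a bipartition without affecting the associated cut, so that the finite-graph duality still applies despite the possibly infinite isolated part of $H$.
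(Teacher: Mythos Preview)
Your proof is correct and uses the same representation as the paper: indexing by oriented finite cycles and setting $f(e)(C)=\epsilon_C(e)$. The forward direction is identical to the paper's.

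For the converse, the paper takes a shorter route than your contraction argument. Once you know that every finite cycle meeting $D':=\supp(c)$ meets it in at least two edges, the paper simply observes that $D'$ meets every spanning tree of $G$ (otherwise a fundamental cycle would meet $D'$ in exactly one edge), so $E\setminus D'$ is disconnected and $D'$ contains a cut; since $D'$ is finite this cut is finite and hence contains a finite bond. Your approach---contracting $E\setminus D$ to get a finite multigraph $H$, invoking cycle--cut orthogonality over $k$ in $H$, and lifting back---reaches the same conclusion via heavier machinery. What it buys is an explicit description of $c$ itself as a cut vector, not just of its support, though that extra information is not needed here.

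One small imprecision worth noting: a cycle of $H$ lifts to a finite closed \emph{walk} in $G$, not necessarily a simple cycle, since the internal connecting paths inside different components may share edges or vertices. This is harmless for your argument because $c$ vanishes on $E\setminus D$ and any closed walk is, as a $1$-chain, a $k$-linear combination of simple cycles of $G$; but the sentence ``every cycle of $H$ lifts to a finite cycle of $G$'' should be phrased accordingly.
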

\begin{proof}
We start by giving fixed directions to every edge, cycle and finite bond. Let $O$ be the set of all cycles of $G$ and for any edge $e \in E(G)$ define a function $O \xrightarrow {\phi(e)}k$ such that for any 
$o\in O$, $\phi(e)(o)$ is $1$ if $e\in o$ and they have the same directions, $-1$ if $e\in o$ and they have different directions, and $0$ if $e$ isn't an edge of $o$. This defines a map $E(G) \xrightarrow{\phi} k^O$. We will show $M(\phi)=M_{FB}(G)$.

We need to show that $D \subseteq E(G)$ is dependent in $M_{FB}(G)$ iff the it is dependent in $M(\phi)$. If $D$ is dependent in $M_{FB}(G)$ then it contains a finite bond $D'$. For any edge $e\in D'$ define $c(e)$ to be $1$ if $D'$ and $e$ have the same directions, and $-1$ if they have different directions, and $0$ if they don't meet.
Now consider a fixed cycle $o$ which meets $D'$. Clearly $D'$ has two sides and this cycle has to traverse $D'$ from the first side to the second side as many times as
 it traverses $D'$ from the second side to the first. As a result, for any $o\in O$ we have $\sum_{e \in E} c(e) \phi(e)(o)=0$ and so $c$ is a linear dependence of $D$.
 
 Conversely, suppose that $D$ is dependent in $M(\phi)$, and let $D'$ be the support of any thin dependence of $D$.
 Whenever the edge set of a cycle meets $D'$, they have to meet in at least two edges, which means $D'$ (and so also $D$) meets every spanning tree and so it contains a bond and so it is a dependent set in $M_{FB}(G)$.
 \end{proof}

 Recall that for any graph $G$ the (possibly infinite) bonds of $G$ are the circuits of a matroid $M_B(G)$ on the edge set of $G$%
\footnote{See \cite{RD:HB:graphmatroids}.}. In the above proof, we could exchange the role of finite bonds and arbitrary bonds and see that $M_B(G)$ is a thin sums matroid. We could also exchange the role of finite cycles and arbitrary bonds, and finite bonds and finite cycles, to get another proof of the fact that $M_{FC}(G)$ is representable. 
 It is not difficult to see that the finite cycle matroid and the bond matroid of a graph $G$ are dual to each other%
\footnote{See \cite{RD:HB:graphmatroids}.}.
  
 As has been shown in \cite{RD:HB:graphmatroids}, for any graph $G$ the circuits of the dual of the finite bond matroid of $G$ are given by the topological circles in a topological space associated to $G$. For this reason, $M^*_{FB}(G)$ is called the topological cycle matroid of $G$, and denoted $M_C(G)$.  In the next section, we shall show that $M_C(G)$ is also a thin sums matroid. 

We will only give a brief summary of the construction of the topological space behind the topological cycle matroid. A ray is a one-way infinite path. 
Two rays are edge-equivalent if for any finite set $F$ of edges there is a connected component of $G\setminus F$ that contains subrays of both rays. The equivalence classes of 
this relation are the edge-ends of $G$; we denote the set of these edge-ends by $\Ecal(G)$. Let us view the edges of $G$ as disjoint topological copies of [0,1], 
and let $X_G$ be the quotient space obtained by identifying these copies at their common vertices. The set of inner points of an edge $e$ will be 
denoted by $e\!\!^{^o}$. We now define a topological space $||G||$ on the point set of $X_G\cup \Ecal(G)$ by taking as our open sets the union of 
sets $\tilde{C}$, where $C$ is a connected component of $X_G\setminus Z$ for some finite set $Z\subset X_G$ of inner points of edges, and $\tilde{C}$ is obtained 
from $C$ by adding all the edge-ends represented by a ray in $C$. For any $X\subseteq ||G||$ we call $\{e \in E(G) | e\!\!^{^o} \subseteq X\}$ the edge set of $X$.
A subspace $C$ of $||G||$ that is homeomorphic to $S^1$ is a {\em topological circle} in $||G||$. In \cite{RD:HB:graphmatroids} is shown that the edge sets of these circles in $||G||$ are the circuits of $M^*_{FB}(G)$. 

\section{Representable matroids and thin sums}\label{corep}
In this section we elucidate the connections between representable matroids and thin sums matroids. First we show that any representable matroid is a thin sums matroid, so
thin sums matroids are a generalisation of representable matroids. After that we will characterise the dual of an arbitrary representable matroid and show that not only is every representable matroid a thin sums matroid but every matroid whose dual is representable is also a thin sums matroid. In fact, our last result even is stronger; we show that the duals of representable matroids are precisely the thin sums matroids for thin families. Since the finite bond matroid of any graph is representable, this implies in particular that its dual, the topological cycle matroid, is a thin sums matroid.

As usual, let $V^*$ be the dual of the vector space $V$ (that is, the vector space consisting of all linear maps from $V$ to $k$).

\begin{thm}\label{repisthin}
Consider a map $E \xrightarrow{\phi} V$ and the representable matroid $M(\phi)$. For any $e\in E$ and $\alpha \in V^*$ define $E \xrightarrow{f} k^{V^*}$ by $f(e)(\alpha):=\alpha.\phi(e)$. 
Then,
$$M(\phi)=M_{ts}(f).$$
In particular, $M(\phi)$ is a thin sums matroid.
\end{thm}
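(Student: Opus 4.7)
The goal is to show that a subset $E' \subseteq E$ admits a nonzero linear dependence (as a family in $V$) if and only if it admits a nonzero thin dependence (with respect to $f$ in $k^{V^*}$). So I plan to translate between these two notions of dependence rather than comparing independent sets directly.

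One direction is almost immediate. If $c : E \to k$ is a nonzero linear dependence of $\phi$, then $c$ has finite support and $\sum_{e} c(e)\phi(e) = 0$ in $V$. For any $\alpha \in V^*$, applying $\alpha$ (which is linear, and trivially continuous on finite sums) gives $\sum_{e} c(e)\alpha(\phi(e)) = \alpha(0) = 0$. Since the support is finite, the sum $\sum_e c(e) f(e)(\alpha)$ is well defined for every $\alpha$ and equals $0$, so $c$ is also a thin dependence of $f$.

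The nontrivial direction is the converse, and this is where the main work lies: a thin dependence $c$ need not a priori have finite support, so we must recover finiteness from the thin-sum condition. Let $E'' := \supp(c)$ and assume $c$ is a nonzero thin dependence. I will show that the family $(\phi(e))_{e \in E''}$ is linearly dependent in the usual sense, which yields a nonzero linear dependence of $E'$ (supported in $E''$).

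Suppose, for contradiction, that $(\phi(e))_{e \in E''}$ is linearly independent as a family. In particular, the $\phi(e)$ are all nonzero and pairwise distinct, so we can extend the set $\{\phi(e) : e \in E''\}$ to a basis $B$ of $V$. For a chosen $e_0 \in E''$, let $\alpha \in V^*$ be the coordinate functional sending $\phi(e_0)$ to $1$ and every other element of $B$ to $0$. Then $f(e_0)(\alpha) = 1$ while $f(e)(\alpha) = 0$ for all $e \in E'' \setminus \{e_0\}$, so $\sum_{e \in E} c(e) f(e)(\alpha) = c(e_0) \neq 0$, contradicting the assumption that $c$ is a thin dependence. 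Hence $(\phi(e))_{e \in E''}$ is linearly dependent, producing the required finite-support nonzero linear dependence and completing the equality $M(\phi) = M_{ts}(f)$. The main obstacle is precisely this step of upgrading a potentially infinitely supported thin dependence to a finitely supported linear one, and the coordinate-functional trick using a basis extension is what makes it work.
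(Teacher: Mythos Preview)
Your proof is correct and follows essentially the same route as the paper: both directions rely on extending a linearly independent family $(\phi(e))$ to a basis of $V$ and reading off a suitable functional in $V^*$. The only cosmetic difference is that the paper chooses the functional $\alpha$ to take the value $1$ on \emph{every} $\phi(e)$ in the support (thereby first forcing $\supp(c)$ to be finite via thinness, then concluding that $c$ itself is a linear dependence), whereas you use a single coordinate functional to get a direct contradiction and then only deduce that \emph{some} nonzero linear dependence exists on $E''$; for the matroid equality this is equally sufficient.
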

\begin{proof}
We show that $I$ is independent in $M_{ts}(f)$ iff $I$ is independent in $M(\phi)$. Suppose that $I$ is independent in $M_{ts}(f)$. Suppose that 
$E \xrightarrow{c} k$ is any linear dependence of $\phi$ that is $0$ outside $I$. For any $\alpha \in V^*$ we have,
$$\sum_{e\in E} c(e).f(e)(\alpha)=\sum_{e\in E} c(e)\alpha.\phi(e)=\alpha\left(\sum_{e\in E} c(e)\phi(e)\right) =0.$$
As $\alpha$ was arbitrary we have $\sum_{e\in E} c(e)f(e)=0$ and since $I$ is independent in $M_{ts}(f)$ $c$ must be the $0$ map. So $I$ is also independent in $M(\phi)$.

Conversely, suppose that $I$ is independent in $M(\phi)$. Suppose $E \xrightarrow{c} k$ is any thin dependence of $f$ that is $0$ outside $I$.
Let $I'=\supp(c)$. Since $I' \subseteq I$, $I'$ is also independent in $M(\phi)$, so (by extending the image of $I'$ by $\phi$ to a basis of $V$) we can define a linear map 
$V\xrightarrow{\alpha_{I'}} k$ such that for any $i\in I'$, $\alpha_{I'}(\phi(i))=1$. As the restriction of $f$ to $I'=\supp(c)$ is thin and for any $i\in I'$ $f(i)(\alpha_{I'})=\alpha_{I'}(\phi(i))=1$, $I'$ has to be finite. So for every $\alpha \in V^*$,
$$\alpha\left(\sum_{e\in E} c(e)\phi(e)\right)=\sum_{e\in E} c(e)\alpha.\phi(e)=\sum_{e\in E} c(e)f(e)(\alpha)=0.$$
Since this is true for every $\alpha \in V^*$, we get that $\sum_{e\in I'} c(e)\phi(e)=0$ which means $c$ must be a linear dependence and so must be 0. Therefore $I$ is also indpendent in $M_{ts}(f)$.
\end{proof}

Now let's see how we can move from a representable matroid to its dual. Let's start with a family $E \xrightarrow{\phi} V$. let $C_{\phi}$ be the set of all linear dependences of $\phi$. For any $e\in E$ and 
$c\in C_{\phi}$ define $E \xrightarrow{\widehat{\phi}} k^{C_{\phi}}$ by $\widehat{\phi}(e)(c) := c(e)$. Clearly $\widehat{\phi}$ is a thin family of functions. On the other hand,    
if we let $D_f$ be the set of thin dependences of a thin family $E \xrightarrow{f} k^A$, we get a map $E \xrightarrow{\overline{f}} k^{D_{f}}$ where for $e\in E$ and $d\in D_{f}$ $\overline{f}(e)(d):= d(e)$. 
These processes are, in a sense, inverse to each other.

\begin{lem}\label{backforth}
For any thin family $E \xrightarrow{f} k^A$, a map $d: E \to k$ is a thin dependence of $f$ iff it is a thin dependence of $\widehat{\overline{f}}$.
\end{lem}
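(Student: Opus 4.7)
The plan is to unpack the two definitions and verify each direction by direct computation. By construction, \(C_{\overline{f}}\) consists of the finitely supported maps \(c \colon E \to k\) such that \(\sum_e c(e) \overline{f}(e) = 0\) in \(k^{D_f}\); evaluating this equation coordinate by coordinate, this says that \(\sum_e c(e) d'(e) = 0\) for every \(d' \in D_f\). Consequently, a thin dependence of \(\widehat{\overline{f}}\) is a map \(d \colon E \to k\) such that \(\sum_e d(e) c(e) = 0\) for every \(c \in C_{\overline{f}}\), and the sum is automatically well defined because each such \(c\) has finite support.

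For the forward direction, suppose \(d \in D_f\) and let \(c \in C_{\overline{f}}\) be arbitrary. Evaluating the identity \(\sum_e c(e) \overline{f}(e) = 0\) at the coordinate \(d \in D_f\) immediately yields \(\sum_e c(e) d(e) = 0\), which is exactly the condition needed for \(d\) to be a thin dependence of \(\widehat{\overline{f}}\).

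For the backward direction, the key construction is to exhibit, for each \(a \in A\), a linear dependence of \(\overline{f}\) that acts like evaluation at \(a\). Define \(c_a(e) := f(e)(a)\); since \(f\) is thin, \(c_a\) has finite support and is therefore a candidate for membership in \(C_{\overline{f}}\). To verify \(c_a \in C_{\overline{f}}\) I check that for every \(d' \in D_f\), \(\sum_e c_a(e) \overline{f}(e)(d') = \sum_e f(e)(a) d'(e) = 0\), which is precisely the defining condition for \(d' \in D_f\) at the coordinate \(a\). Applying the hypothesis that \(d\) is a thin dependence of \(\widehat{\overline{f}}\) to the particular element \(c_a \in C_{\overline{f}}\) then gives \(\sum_e d(e) f(e)(a) = \sum_e d(e) c_a(e) = 0\); letting \(a\) range over \(A\) yields \(d \in D_f\).

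The only real obstacle is bookkeeping with the nested notation: \(C_{\overline{f}}\) consists of the finitely supported linear dependences of \(\overline{f}\) viewed as a family in the vector space \(k^{D_f}\), whereas \(D_f\) consists of (possibly infinitely supported) thin dependences of \(f\). Once these are kept straight, the substance of the proof reduces to the single observation that the evaluation maps \(a \mapsto c_a\) form a supply of linear dependences of \(\overline{f}\) rich enough to recover all coordinates in \(A\); everything else is a direct rewriting of one sum as another.
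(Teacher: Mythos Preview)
Your proof is correct and follows essentially the same approach as the paper: the forward direction is the trivial evaluation-at-$d$ argument, and the backward direction hinges on the same key construction $c_a(e) := f(e)(a)$, which lies in $C_{\overline{f}}$ precisely because $f$ is thin. Your initial unpacking of the definitions and the final commentary are more explicit than the paper's version, but the mathematical content is identical.
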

\begin{proof}
First, suppose that $d$ is a thin dependence of $f$. Then for any $c \in C_{\overline{f}}$ we have 
 \[
  \sum_{e\in E} d(e)\widehat{\overline{f}}(e)(c)=\sum_{e\in E} d(e)c(e)=\sum_{e\in E} c(e)\overline{f}(e)(d)=0,
   \]
so $d$ is also a thin dependence of $\widehat{\overline{f}}$.

Now suppose that $d$ is a thin dependence of $\widehat{\overline{f}}$. For any $a \in A$, let $E \xrightarrow{c_a} k$ be defined by the equation $c_a(e) = f(e)(a)$. Since $f$ is thin, $c_a(e)$ is nonzero for only finitely many values of $e$. Since also for any thin dependence $d'$ of $f$ we have
 \[
  \sum_{e\in E} c_a(e)\overline{f}(e)(d')= \sum_{e\in E} c_a(e )d'(e)=\sum_{e\in E} d'(e )f(e)(a)=0,
   \]    
and so $c_a \in C_{\overline{f}}$. Now, since $d$ is a thin dependence of $\widehat{\overline{f}}$, we have
 \[
  \sum_{e\in E} d(e)f(e)(a) = \sum_{e\in E} d(e)c_a(e) = \sum_{e\in E}d(e) \widehat{\overline{f}}(e)(c_a)=0.
   \]    
Since $a$ was arbitrary, this says exactly that $d$ is a thin dependence of $f$.
\end{proof}

An analogous argument shows that for any map $\phi: E \to V$, the linear dependences of $\overline{\widehat{\phi}}$ are exactly those of $\phi$. We can also show that these inverse processes correspond to duality of matroids.

\begin{thm}\label{dualise}
For any map $\phi: E \to V$ we have,
 \[
 M^*(\phi)=M_{ts}(\widehat{\phi}).
  \]    
\end{thm}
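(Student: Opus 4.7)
The plan is to match up the dependent sets of the two matroids. Since $\widehat{\phi}$ is thin, $M_{ts}(\widehat{\phi})$ is already known to be a matroid, and $M^*(\phi)$ is one by general matroid duality. The crux of the proof is an explicit description of the thin dependences of $\widehat{\phi}$: I claim that $d \colon E \to k$ is a thin dependence of $\widehat{\phi}$ if and only if there exists $\alpha \in V^*$ with $d(e) = \alpha(\phi(e))$ for every $e \in E$.

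For the ``if'' direction, given such an $\alpha$ and any $c \in C_\phi$, a direct calculation gives
\[
\sum_e \alpha(\phi(e))\, \widehat{\phi}(e)(c) = \sum_e \alpha(\phi(e))\, c(e) = \alpha\!\left( \sum_e c(e)\phi(e) \right) = \alpha(0) = 0,
\]
the rearrangement being legal because $c$ has finite support. For ``only if'', view $d$ as the linear functional $\tilde d \colon k^{(E)} \to k$, $c \mapsto \sum_e c(e)d(e)$, on the space $k^{(E)}$ of finitely-supported functions $E \to k$. The hypothesis that $d$ is a thin dependence of $\widehat{\phi}$ says exactly that $\tilde d$ vanishes on $C_\phi$, which is the kernel of the evaluation map $L \colon k^{(E)} \to V$, $c \mapsto \sum_e c(e)\phi(e)$. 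So $\tilde d$ factors through $L$ to give a linear functional on $\mathrm{span}(\phi(E)) \subseteq V$; extending this to all of $V$ by choosing a complementary subspace yields an $\alpha \in V^*$ with $\alpha(\phi(e)) = d(e)$ for every $e$.

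To conclude, recall that $I$ is independent in $M^*(\phi)$ iff $E \setminus I$ contains a base of $M(\phi)$, iff $E \setminus I$ spans $M(\phi)$, iff $\mathrm{span}(\phi(E \setminus I)) \supseteq \phi(E)$. Hence $I$ is dependent in $M^*(\phi)$ precisely when some $e_0 \in I$ satisfies $\phi(e_0) \notin \mathrm{span}(\phi(E \setminus I))$; and passing to the quotient $V / \mathrm{span}(\phi(E \setminus I))$, this is equivalent to the existence of an $\alpha \in V^*$ that vanishes on $\phi(E \setminus I)$ but not on $\phi(e_0)$. By the characterisation above, this is exactly the statement that some nonzero thin dependence of $\widehat{\phi}$ is supported in $I$, i.e.\ that $I$ is dependent in $M_{ts}(\widehat{\phi})$.

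The main obstacle is the ``only if'' half of the characterisation of thin dependences: it requires recognising $d$ as a functional on $k^{(E)}$, identifying $C_\phi$ with $\ker L$, and then invoking the axiom of choice to extend the resulting functional on $\mathrm{span}(\phi(E))$ to all of $V$. Everything else is routine bookkeeping, translating between matroid-theoretic coindependence/spanning and their linear-algebraic counterparts for representable matroids.
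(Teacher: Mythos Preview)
Your proof is correct. Both arguments hinge on the same idea---that thin dependences of $\widehat{\phi}$ arise from linear functionals on $V$---but you organise this into an explicit lemma: $d$ is a thin dependence of $\widehat{\phi}$ iff $d = \alpha \circ \phi$ for some $\alpha \in V^*$. The paper does not isolate this characterisation; its forward direction is the same as yours (construct $\alpha$ vanishing on $\phi(E \setminus E_1)$), but its backward direction argues by contradiction, assuming a base $B \subseteq E \setminus E_1$ and using the fundamental linear dependence of $e_1$ over $B$ to force $d(e_1) = 0$. Your route via $C_\phi = \ker L$ and factoring the functional through $k^{(E)}/\ker L$ is cleaner and yields a reusable statement, at the cost of an explicit appeal to the axiom of choice to extend the functional off $\mathrm{span}(\phi(E))$; the paper's contradiction argument avoids needing the full characterisation but is a bit more ad hoc.
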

\begin{proof}
Suppose we have a set $E_1$ which is dependent in the dual of $M(\phi)$: that is, it meets every basis of $M(\phi)$. Let $E_2 = E \setminus E_1$, so $E_2$ doesn't contain any basis of $E$ - that is, $E_2$ doesn't span this matroid, and we can pick $e_1 \in E_1$ such that $\phi(e_1)$ isn't in the span of the family $(\phi(e) | e \in E_2)$. Consider a basis $B_2$ for this span, and extend $B_2+  \phi (e_1)$ to a basis $B$ for V, and define a map $B \xrightarrow{h_0} F$ such that $h_0(\phi (e_1)):=1$,  and  otherwise 0. Finally, extend $h_0$ to a linear map $V \xrightarrow{h} F$. Now, for any linear dependence $c$ of $ \phi$ we have
 \[
	\sum_{e \in E} (h\cdot\phi)(e) \widehat{\phi}(e)(c)=h\left(\sum_{e\in E} c(e)\phi(e)\right)=0
\]  
So $h \cdot \phi$ is a thin dependence of $\widehat{\phi}$, and since it is 0 outside $E_1$, $E_1$ is dependent with respect to $\widehat{\phi}$.

Conversely, suppose that $E_1$ is dependent in $M_{ts}(\widehat{\phi})$, so that there is a nonzero thin dependence $d$ of $\widehat{\phi}$ which is 0 outside $E_1$. We want to show that $E_1$ meets every basis of $M(\phi)$, so suppose for a contradiction that there is such a basis $B$ which it doesn't meet. Pick $e_1 \in E_1$ so that $d$ is nonzero at $e_1$. We can express $\phi(e_1)$ as a linear combination of vectors from the family $(\phi(e) | e \in B)$ - that is, there is a linear dependence $c$ of $\phi$ which is nonzero only on $B$ and at $e_1$, with $c(e_1) = 1$. But then 
\[
 d(e_1) =  \sum_{e\in E} d(e)c(e) =\sum_{e\in E} d(e)\widehat{\phi}(e)(c) = 0,
\]   
which is the desired contradiction. Thus $E_1$ does meet every basis of $M(\phi)$, so it is dependent in the dual of $M(\phi)$.
\end{proof}

\begin{coroll}
For any thin family $ E \xrightarrow{f} k^A$ we have,
\[
 M_{ts}(f)=M^*(\overline{f}).
  \]    
In particular $M_{ts}(f)$ is a cofinitary matroid. 
\end{coroll}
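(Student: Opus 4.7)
The plan is to apply Theorem~\ref{dualise} with the thin family $\overline{f}$ playing the role of $\phi$. Setting $\phi := \overline{f} \colon E \to k^{D_f}$ (viewed as a map into a vector space), the theorem yields
\[
M^*(\overline{f}) = M_{ts}(\widehat{\overline{f}}).
\]
Then I would invoke Lemma~\ref{backforth}, which says that a map $d \colon E \to k$ is a thin dependence of $f$ if and only if it is a thin dependence of $\widehat{\overline{f}}$. Since in each setting dependent sets are exactly the supports of nonzero (thin) dependences, this means $M_{ts}(f)$ and $M_{ts}(\widehat{\overline{f}})$ have literally the same independent sets.

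Combining the two displays, the thin sums system $M_{ts}(f)$ has the same independent sets as the matroid $M^*(\overline{f})$. The point here is that $M(\overline{f})$ is a representable matroid, hence a matroid, and duals of matroids are matroids in the sense of \cite{matroid_axioms}; so $M^*(\overline{f})$ is genuinely a matroid. Having the same independent sets as a matroid, $M_{ts}(f)$ is itself a matroid, and the desired equality $M_{ts}(f) = M^*(\overline{f})$ holds.

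Finally, for cofinitariness, note that $\bigl(M^*(\overline{f})\bigr)^* = M(\overline{f})$ is representable and thus finitary (every linear dependence has finite support). Hence $M_{ts}(f)$ has a finitary dual, i.e.\ is cofinitary. No step is really an obstacle here: the whole corollary is a formal consequence of Theorem~\ref{dualise} and Lemma~\ref{backforth}, together with the standard fact that duals of matroids are matroids; the only thing to be careful about is the logical order, namely that matroidhood of $M_{ts}(f)$ is derived from, rather than assumed before, the identification with $M^*(\overline{f})$.
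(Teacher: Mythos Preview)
Your proposal is correct and follows exactly the paper's approach: apply Theorem~\ref{dualise} with $\phi=\overline{f}$ to get $M^*(\overline{f})=M_{ts}(\widehat{\overline{f}})$, then use Lemma~\ref{backforth} to identify $M_{ts}(\widehat{\overline{f}})$ with $M_{ts}(f)$. The paper's proof is a one-liner doing precisely this; your additional remarks on matroidhood and cofinitariness just make explicit what the paper leaves to the reader.
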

\begin{proof}
This is immediate from theorem \ref{dualise}, since by lemma \ref{backforth} we have $M_{ts}(f) = M_{ts}(\widehat{\overline{f}})$.
\end{proof}

\section{A sufficient condition for $M_{ts}$ to be a matroid}\label{cond}

Throughout this section, $f$ will denote a map $E \xrightarrow{f} k^A$ for some sets $A$ and $E$ and field $k$. Since so many examples of matroids are of the form $M_{ts}(f)$ for some such $f$, it would be good to be able to characterise when the set system $M_{ts}(f)$ is a matroid. Although this set system clearly satisfies the axioms (I1) and (I2), it need not satisfy either (I3) or (IM). Thus the algebraic cycle system of the Bean graph satisfies (IM) but not (I3) (as we shall soon show).

On the other hand, we can also define a thin sums system which fails to satisfy (IM). Let $E = \Nbb \times \{0, 1\}$, and define a function $E \xrightarrow{f} \Qbb^{\Nbb}$ by $f((n,0))(i) = i^n$ and $f((n, 1))(i) = -1$ if $n = i$ and 0 otherwise. Thus for any thin dependence $c$ of $f$, there can only be finitely many $n \in \Nbb$ with $c((n, 0))$ nonzero, and the remaining values of $c$ are determined by the polynomial expression
\begin{equation}\label{polyeq}
c((i, 1)) = \sum_{n \in \Nbb}c((n, 0)) i^n \, .
\end{equation}
In particular, if $c$ is 0 outside of $\Nbb \times \{0\}$, then this polynomial must be the 0 polynomial and so $c$ must be the 0 function. This shows that $\Nbb \times \{0\}$ is thinly independent for $f$. To show that $M_{ts}(f)$ doesn't satisfy (IM), we shall show that there is no maximal thinly independent superset of $\Nbb \times \{0\}$. More precisely, we shall show that, for a subset $X$ of $\Nbb$, the set $\Nbb \times \{0\} \cup X \times \{1\}$ is thinly independent if and only if $\Nbb \setminus X$ is infinite. In fact, the same argument as that above shows that this set is thinly independent whenever $\Nbb \setminus X$ is infinite, since the only polynomial which is zero in infinitely many places is the zero polynomial. Conversely, if $\Nbb \setminus X$ is finite, then pick some nonzero polynomial $\sum_{n = 0}^N a_n x^n$ with roots at all elements of $\Nbb \setminus X$, and define $c((n, 0))$ to be $a_n$ for $n \leq N$ and 0 otherwise. Define $c((i, 1))$ by the polynomial formula \eqref{polyeq}. Then $c$ is a nontrivial thin dependence which is 0 outside $\Nbb \times \{0\} \cup X \times\{1\}$, so that set is thinly dependent.

We shall argue in the next section that some condition like (IM) is unavoidable, but we can at least get rid of the condition (I3). We do this by defining for each $f$ a different set system $M^{\co}_{ts}(f)$, which satisfies (I3) in addition to (I1) and (I2), and such that if it satisfies (IM) then $M_{ts}(f)$ is a matroid (in fact, in such cases $M^{\co}_{ts}(f) = M^*_{ts}(f)$). 

We will make use of a compactness lemma, corresponding to the compactness of a topological space which (so far as we know) has not been introduced in the literature. We therefore introduce it here.

\begin{dfn}
An {\em affine equation} over a set $I$ with coefficients in $k$ consists of a sequence $(\lambda_i \in k| i \in I)$ such that only finitely many of the $\lambda_i$ are nonzero and an element $\kappa$ of $k$. 

A sequence $(x_i | i \in I)$ is a {\em solution} of the equation $(\lambda, \kappa)$ iff $\sum_{i \in I} \lambda_i x_i = \kappa$ (we shall also slightly abuse notation by using expressions like this as names for equations). x is a solution of a set $E$ of equations iff it is a solution of every equation in $E$.
\end{dfn}

The proof of the following Lemma is based on an argument of Bruhn and Georgakopoulos
\cite{basis}.

\begin{lem}\label{compsolve}
If every finite subset of a set $E$ of affine equations over $I$ with coefficients in $k$ has a solution then so does $E$.
\end{lem}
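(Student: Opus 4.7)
The plan is to run a Zorn's lemma ``completeness theorem'' style argument: enlarge $E$ to a maximal finitely-solvable system of equations and read off a solution coordinate by coordinate. Let $\mathcal{P}$ be the collection of all sets $E' \supseteq E$ of affine equations over $I$ such that every finite subset of $E'$ has a solution in $k^I$. Clearly $E \in \mathcal{P}$, and $\mathcal{P}$ is closed under unions of chains, since any finite subset of $\bigcup_\gamma E_\gamma$ already lies in some single $E_\gamma$. Zorn's lemma therefore yields a maximal element $E^* \in \mathcal{P}$.

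The key step is to show that for every $i \in I$ there is some $\alpha_i \in k$ with the trivial equation $x_i = \alpha_i$ lying in $E^*$. For any finite $F \subseteq E^*$, let $A_i(F)$ be the set of possible values of the $i$-th coordinate across solutions of $F$. Standard finite linear algebra shows that $A_i(F)$ is either a single point or all of $k$, and if $A_i(F)$ and $A_i(F')$ are both singletons they coincide, since any solution of the finite set $F \cup F' \subseteq E^*$ witnesses this. Two cases remain: (a) some finite $F \subseteq E^*$ already pins $A_i(F) = \{\alpha_i\}$, in which case every finite subset of $E^* \cup \{x_i = \alpha_i\}$ is solvable, because given a finite $F' \subseteq E^*$, any solution of the finite set $F \cup F'$ has $i$-th coordinate $\alpha_i$ and hence solves $F' \cup \{x_i = \alpha_i\}$; (b) every $A_i(F) = k$, in which case we may freely adjoin $x_i = 0$ without losing finite solvability. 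In either case, the maximality of $E^*$ forces the pinning equation $x_i = \alpha_i \in E^*$ (with $\alpha_i = 0$ in case (b)).

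Finally define $x^* := (\alpha_i)_{i \in I} \in k^I$. To verify that $x^*$ solves an arbitrary equation $\sum_{i \in I} \lambda_i x_i = \kappa$ in $E \subseteq E^*$, apply the finite-solvability hypothesis to the finite subset of $E^*$ consisting of this equation together with the pinning equations $\{x_i = \alpha_i : i \in \supp(\lambda)\}$. The resulting finite system has a solution, and any such solution must agree with $x^*$ on $\supp(\lambda)$, so $\sum_{i \in I} \lambda_i \alpha_i = \kappa$, as required.

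The main obstacle is really just the case analysis in the middle paragraph, which uses the finiteness of the support of each equation crucially to reduce questions about the infinite system $E^*$ to finite-dimensional linear algebra; once that is in hand the rest is bookkeeping.
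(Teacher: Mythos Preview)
Your proof is correct and follows essentially the same strategy as the paper: apply Zorn's lemma to obtain a maximal finitely-solvable system, show each coordinate is pinned, and verify the pinned values solve $E$. The only cosmetic difference is that the paper applies Zorn to subsets $I'\subseteq I$ for which one can consistently adjoin $\{x_i=1:i\in I'\}$ (and then uses an affine-combination trick $As+Bt$ with $A+B=1$ to normalize to $1$), whereas you apply Zorn to all finitely-solvable supersets of $E$, which lets you adjoin $x_i=\alpha_i$ for the correct $\alpha_i$ directly and replaces the combination trick by the observation that the projection of an affine solution set to one coordinate is a point or all of $k$.
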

\begin{proof}
Let \Ical\ be the set of all subsets $I'$ of $I$ such that every finite subset of $E \cup \{x_i = 1 | i \in I'\}$ has a solution. \Ical\ is nonempty since it contains $\emptyset$, and the union of any chain of elements of \Ical\ is an upper bound for those elements in \Ical. So by Zorn's lemma, \Ical\ has a maximal element $I_m$. Let $E_m = E \cup \{x_i = 1 | i \in I_m\}$. 

\begin{sublem}
For any $i \in I$ there is a finite set $K_i \subseteq E_m$ and an element $u_i \in k$ such that every solution $x$ of $E_i$ has $x_i = u_i$.
\end{sublem}
\begin{proof}
Suppose for a contradiction that we can find $i_0 \in I$ where this fails - it follows that $i_0 \not \in I_m$. We shall show that $I_m \cup \{i_0\} \in \Ical$, contradicting the maximality of $I_m$. Let $K$ be any finite subset of $E_m$. By assumption, $K$ has two solutions $s$ and $t$ with $s_{i_0} \neq t_{i_0}$. Since $k$ is a field, we can find $A$ and $B$ in $k$ such that $A + B = 1$ and $As_{i_0} + Bt_{i_0} = 1$. Then for each equation $\sum_{i \in I} \lambda_i x_i = \kappa$ in $K$ we have $\sum_{i \in I}\lambda_i(As_i + Bt_i) = A\sum_{i \in I} \lambda_is_i + B \sum_{i \in I} \lambda_it_i = A\kappa + B\kappa = \kappa$, so $As + Bt$ is a solution of $K$, and since $As_{i_0} + Bt_{i_0} = 1$ it is even a solution of $K \cup \{x_{i_0} = 1\}$, as required.
\end{proof}

It is now enough to show that the $u_i$ introduced above form a solution of $E$. Let $e$, given by $\sum_{i \in I} \lambda_i x_i = \kappa$, be any equation in $E$, and let $S$ be the finite subset of $I$ on which the $\lambda_i$ are nonzero. Let $K = \bigcup_{i \in S} K_i \cup \{e\}$. Since $K$ is finite, it has a solution $x$. For each $i \in S$ we have $x_i = u_i$, so $\sum_{i \in I} \lambda_iu_i = \sum_{i \in I} \lambda_ix_i = \kappa$, so the $u_i$ form a solution of $e$ and, since $e$ was arbitrary, they form a solution of $E$.
\end{proof}

This lemma is all we will really need. However, it looks like it ought to correspond to some sort of compactness, and indeed it does. 

\begin{dfn}
For any affine equation $e$ over $I$ with coefficients in $k$, let $C_e$ be the set of solutions of $e$. For any finite set $E$ of affine equations, let $C_E = \bigcup_{e \in E} C_e$.
The {\em affine Zariski topology} on $k^I$ is that with the $C_E$ as its basic closed sets.
\end{dfn}

The reason for this name is the analogy between this definition and the Zariski topology on $k[X]$ for a finite set $X$.

\begin{thm}
The affine Zariski topology is compact.
\end{thm}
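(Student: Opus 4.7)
The plan is to reduce compactness of the affine Zariski topology to Lemma~\ref{compsolve} via a Zorn-style selection argument, turning the ``finite disjunction'' structure of the basic closed sets $C_E$ into a single equation per index.

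First I would observe that $C_E \cup C_{E'} = C_{E \cup E'}$, so finite unions of basic closed sets are again basic closed sets; since an arbitrary closed set is an intersection of basic closed sets, any family of closed sets with the finite intersection property refines to a family of basic closed sets still having FIP. Hence it suffices to show that every family $\{C_{E_\alpha}\}_{\alpha \in A}$ of basic closed sets with FIP has nonempty intersection. A point $x \in \bigcap_\alpha C_{E_\alpha}$ is precisely one which, for each $\alpha$, satisfies at least one equation in $E_\alpha$. If one could select a single $e_\alpha \in E_\alpha$ for each $\alpha$ so that $\{e_\alpha\}_{\alpha \in A}$ still had the property that every finite subfamily has a common solution, Lemma~\ref{compsolve} would immediately furnish a global solution.

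To perform the selection I would apply Zorn's lemma to the poset $\mathcal{Q}$ whose elements are functions $\sigma$ with $\emptyset \neq \sigma(\alpha) \subseteq E_\alpha$ for each $\alpha$, such that $\{C_{\sigma(\alpha)}\}_{\alpha \in A}$ still has FIP, ordered by $\sigma \leq \sigma'$ iff $\sigma(\alpha) \supseteq \sigma'(\alpha)$ for every $\alpha$. The map $\alpha \mapsto E_\alpha$ lies in $\mathcal{Q}$ by hypothesis. For a chain $(\sigma_i)$ the pointwise intersection $\tau(\alpha) = \bigcap_i \sigma_i(\alpha)$ is a nonempty subset of $E_\alpha$ because $E_\alpha$ is finite and the sets $\sigma_i(\alpha)$ form a descending chain of its subsets, hence stabilise at some index $i^*(\alpha)$; and FIP passes to $\tau$ because any finite $F \subseteq A$ involves only finitely many stabilisation points, whose maximum $i^{**}$ in the chain satisfies $\tau(\alpha) = \sigma_{i^{**}}(\alpha)$ for all $\alpha \in F$, so FIP of $\sigma_{i^{**}}$ delivers the required common solution on $F$. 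Thus Zorn produces a maximal $\sigma^* \in \mathcal{Q}$.

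The crucial and most delicate step is to show that $|\sigma^*(\alpha)| = 1$ for every $\alpha$. Suppose some $\sigma^*(\alpha_0)$ contains distinct $e_1, e_2$, and for $j \in \{1,2\}$ let $\sigma_j$ agree with $\sigma^*$ except that $\sigma_j(\alpha_0) = \sigma^*(\alpha_0) \setminus \{e_j\}$. By maximality neither $\sigma_j$ lies in $\mathcal{Q}$, so there exist finite sets $F_j \ni \alpha_0$ for which $\{C_{\sigma_j(\alpha)}\}_{\alpha \in F_j}$ has empty intersection. Applying FIP of $\sigma^*$ to $F_1 \cup F_2$ yields a point $x$ satisfying some $e \in \sigma^*(\alpha_0)$; since $e$ must differ from at least one of $e_1, e_2$, the point $x$ in fact witnesses FIP of the corresponding $\sigma_j$ on $F_j$, a contradiction. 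Hence $\sigma^*$ is a singleton selection $\{e_\alpha\}_{\alpha \in A}$, so Lemma~\ref{compsolve} produces a simultaneous solution $x \in k^I$, and $x \in C_{e_\alpha} \subseteq C_{E_\alpha}$ for every $\alpha$ gives the desired point of $\bigcap_\alpha C_{E_\alpha}$.
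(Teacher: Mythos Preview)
Your proof is correct. Both your argument and the paper's reduce the problem to producing, from a family of finite equation-sets with the FIP, a single selection $e_\alpha \in E_\alpha$ whose finite subsets are all solvable, and then invoking Lemma~\ref{compsolve}. The difference lies in how that selection is obtained. The paper packages the selection step as a one-line application of Tychonoff: it takes $X = \prod_{E \in \Ecal} E$ with the product topology, notes that the sets $X_K = \{(e_E) : \{e_E \mid E \in K\}\text{ has a solution}\}$ are closed with the FIP, and extracts an element of $\bigcap_K X_K$ by compactness of $X$. Your Zorn argument on the poset $\mathcal{Q}$ of FIP-preserving refinements is essentially a direct proof of this instance of Tychonoff (compactness of a product of finite discrete spaces), with the ``two-element splitting'' step playing the role of the usual maximality argument. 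The paper's route is shorter and more conceptual once Tychonoff is available; your route is more self-contained and makes explicit where the finiteness of each $E_\alpha$ is used (in the chain-stabilisation step), at the cost of some extra bookkeeping.
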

\begin{proof}
Let $\Ecal$ be a set of finite sets of affine equations, such that for any finite subset $K$ of $\Ecal$ the set $\bigcap_{E \in K} C_E$ is nonempty. What we need to show is that $\bigcap_{E \in \Ecal} C_E$ is also nonempty. Let $X$ be $\prod_{E \in \Ecal} E$, with the product topology. For each finite $K \subseteq \Ecal$, let $X_K$ be the subset of $X$ consisting of all $(e_E | E \in \Ecal)$ such that $\{e_E | E \in K\}$ has a solution. $X_K$ is closed and nonempty since $K$ is finite. For any finite family $(K_j | j \in J)$ of such $K$ we have that $\bigcap_{j \in J} X_{K_j} \supseteq X_{\bigcup_{j \in J} K_j}$, so it is nonempty. Since $X$ is compact, the intersection of all the $X_K$ is also nonempty, so we can pick an element $e$. Then we know that every finite subset of $\{e_E | E \in \Ecal\}$ has a solution, so by Lemma \ref{compsolve} there is a solution $x$ of the whole set of equations. But then $x$ lies in $\bigcap_{E \in \Ecal} C_E$, which is therefore nonempty.
\end{proof}

\begin{lem}\label{mindep}
Let $d$ be a thin dependence of $f$. Then $\supp(d)$ is a union of minimal dependent sets of $M_{ts}(f)$.
\end{lem}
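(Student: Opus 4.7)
The plan is to show, for each $e_0 \in \supp(d)$, that there is a minimal dependent set $S$ of $M_{ts}(f)$ with $e_0 \in S \subseteq \supp(d)$. Write $I := \supp(d)$; recall from Section \ref{pre} that the restriction $f|_I$ is thin, so that sums of the form $\sum_{e \in I} f(e)(a) x_e$ are always finite.

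For each $Z \subseteq I \setminus \{e_0\}$ consider the affine system $E_Z$ in variables $(x_e \mid e \in I)$ consisting of the equation $x_{e_0} = 1$, the equations $x_z = 0$ for each $z \in Z$, and the equations $\sum_{e \in I} f(e)(a) x_e = 0$ for each $a \in A$. Thinness of $f|_I$ guarantees that every equation involves only finitely many nonzero coefficients. Let $\Ical$ be the collection of those $Z$ for which $E_Z$ has a solution. Then $\emptyset \in \Ical$, since $e \mapsto d(e)/d(e_0)$ solves $E_\emptyset$. I claim $\Ical$ is closed under unions of chains: given such a chain $(Z_\alpha)$ with union $Z^*$, any finite $K \subseteq E_{Z^*}$ mentions only finitely many elements of $Z^*$ and hence lies in some $E_{Z_\alpha}$, which is solvable. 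Lemma \ref{compsolve} then produces a solution of all of $E_{Z^*}$. Zorn's lemma accordingly yields a maximal $Z_{\max} \in \Ical$, whose associated solution, extended by $0$ outside $I$, is a thin dependence $d^\sharp$ of $f$ with $d^\sharp(e_0) = 1$ and $\supp(d^\sharp) \subseteq I \setminus Z_{\max}$. Set $S := \supp(d^\sharp)$.

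To show $S$ is a minimal dependent set, suppose for contradiction there were a thin dependence $d'$ with $\emptyset \neq \supp(d') \subsetneq S$; then in particular $\supp(d') \cap Z_{\max} = \emptyset$. If $d'(e_0) \neq 0$, rescale so that $d'(e_0) = 1$, and pick any $e' \in S \setminus \supp(d')$: then $d'$ solves $E_{Z_{\max} \cup \{e'\}}$, contradicting maximality of $Z_{\max}$. If instead $d'(e_0) = 0$, pick $e' \in \supp(d') \subseteq S$; since $e' \in S$, $d^\sharp(e') \neq 0$, and the combination $d'' := d^\sharp - \frac{d^\sharp(e')}{d'(e')} d'$ is a thin dependence of $f$ supported in $I$ satisfying $d''(e_0) = 1$, $d''|_{Z_{\max}} = 0$, and $d''(e') = 0$, again witnessing $Z_{\max} \cup \{e'\} \in \Ical$ and contradicting maximality. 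Hence $S$ is a minimal dependent set of $M_{ts}(f)$ with $e_0 \in S \subseteq \supp(d)$, and since $e_0$ was arbitrary, $\supp(d)$ is a union of such sets.

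The main obstacle is the chain step of Zorn's lemma: the set of thin dependences supported in $I$ need not form a vector space in any useful sense (one must avoid taking infinite sums of them), and $S$ may itself be uncountable, so one cannot simply take unions or intersections of supports. This is precisely the situation Lemma \ref{compsolve} was designed for, and once it is invoked the remaining ``cut and rescale'' verification reduces to routine finite linear algebra.
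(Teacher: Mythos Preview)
Your proof is correct and follows essentially the same route as the paper: set up affine systems over $I=\supp(d)$ using the thinness of $f|_I$, apply Zorn's lemma to the collection of ``zeroing'' sets $Z$, invoke Lemma~\ref{compsolve} to pass from finite solvability to a global solution, and then verify minimality of the resulting support by a linear-combination argument. Your two-case treatment of the minimality step (splitting on whether $d'(e_0)=0$) is in fact slightly more careful than the paper's version, which tacitly assumes the putative smaller dependence misses $e_0$.
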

\begin{proof}
Let $I = \supp(d)$. It suffices to show that for any $e_0 \in I$ there is a minimal dependent set which contains $e_0$ and is a subset of $I$. We begin by fixing such an $e_0$.

For any $a\in A$ there are only finitely many $e\in I$ with $f(e)(a)\neq 0$, so for any $a\in A$ we get an affine equation $\sum_{e\in I} f(e)(a) x_e=0$ over $I$. Let $\Ecal$ be the set of all affine equations arising in this way.
Let $\Scal$ be the set of all subsets $I'$ of $I$ such that every finite subset of $\Ecal \cup \{x_e=0|e\in I'\} \cup \{x_{e_0}=1\}$ has a solution. Since $d \restric_I$ is a solution of all equations in $\Ecal$, $(d\restric_I)/d(e_0)$ is a solution of all equations in 
$\Ecal \cup\{x_{e_0}=1\}$, so $\emptyset \in \Ecal$. $\Scal$ is also closed under unions of chains, so by Zorn's lemma it has in has a maximal element $E_m$. Now by Lemma \ref{compsolve} there is some solution $d'$ of all the equations in
 $\Ecal \bigcup \{x_e=0|e\in E'\} \bigcup \{x_{e_0}=1\}$. Since $d'$ solves all the equations in $\Ecal$, its extension to $E$ taking the value 0 outside $I$ is a thin dependence of $f$.

We shall show that $D:=\supp(d')=E\setminus E_m$ is the desired minimal dependent set. If it were not, there would have to be a nonzero thin dependence $d''$ with $\supp(d'') = \supp(d') - e_0$. But then for any $e_1 \in \supp(d'')$, we have that $d' - \frac{d'(e_1)}{d''(e_1)}d''\restric_I$ is a solution of $x_{e_1} = 0$ in addition to the equations solved by $d'$, which contradicts the maximality of $E_m$.
\end{proof}

\begin{coroll}\label{spans}
If $M_{ts}(f)$ is a matroid, and $E' \subseteq E$, then $e \not \in E'$ is in the closure of $E'$ iff there is a thin dependence $d$ with $\supp(d) \subseteq E' \cup \{e\}$ and $d(e) = 1$.
\end{coroll}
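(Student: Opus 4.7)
The plan is to use the standard matroid-theoretic characterisation of closure in terms of circuits, namely that for $e\notin E'$, $e\in\Sp_{M_{ts}(f)}(E')$ iff there is a circuit $o$ with $e\in o\subseteq E'+e$, and then to match up circuits of $M_{ts}(f)$ with the supports of thin dependences by invoking Lemma \ref{mindep}.

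For the $(\Leftarrow)$ direction, suppose a thin dependence $d$ with $\supp(d)\subseteq E'+e$ and $d(e)=1$ is given. Since $d(e)\neq 0$, the element $e$ lies in $\supp(d)$, and Lemma \ref{mindep} says $\supp(d)$ is a union of minimal dependent sets, i.e.\ of circuits, so there is a circuit $o$ with $e\in o\subseteq\supp(d)\subseteq E'+e$. This puts $e$ in the closure of $E'$.

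For the $(\Rightarrow)$ direction, if $e\in\Sp_{M_{ts}(f)}(E')\setminus E'$, pick a circuit $o$ of $M_{ts}(f)$ with $e\in o\subseteq E'+e$. Since $o$ is dependent there is some nonzero thin dependence $c$ with $\supp(c)\subseteq o$; but then $\supp(c)$ is itself dependent (witnessed by $c$), so minimality of $o$ forces $\supp(c)=o$, and in particular $c(e)\neq 0$. Setting $d:=c/c(e)$ gives the thin dependence required.

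There is no real obstacle here: the content of the corollary sits entirely in Lemma \ref{mindep}, which converts the a priori weak property that supports of thin dependences are merely dependent into the stronger property that they decompose into circuits. Once that is in hand, both directions are a line or two.
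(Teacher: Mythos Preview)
Your proof is correct and essentially identical to the paper's own argument: both directions invoke Lemma~\ref{mindep} for $(\Leftarrow)$ and the circuit characterisation of closure plus minimality for $(\Rightarrow)$, with a final rescaling to get $d(e)=1$. If anything, you are slightly more explicit than the paper in justifying why a thin dependence with support exactly equal to the circuit exists.
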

\begin{proof}
If there is such a $d$, by Lemma \ref{mindep} we can find a minimal dependent set $D$ with $e\in D\subseteq \supp(d)$. As $D\setminus  \{e\}\subseteq E'$ is independent, $e \in \Sp(E')$. If $e \in \Sp(E')$ then there is a circuit $D$ with 
$e\in D\subseteq E' \cup \{e\}$. Let $d$ be a thin dependence with $\supp(d) = D$. Then $d(e) \neq 0$ since $D - e_0$ is indpendent: scaling if necessary, we can take $d(e)=1$.
\end{proof}

\begin{coroll}\label{coind}
Let $M_{ts}(f)$ be a matroid. Then a subset $I$ is independent in $M^*_{ts}(f)$ iff for every $i \in I$ there is a thin dependence $d_i$ of $f$ such that $d_i(i) = 1$ and $d_i$ is $0$ on the rest of $I$.
\end{coroll}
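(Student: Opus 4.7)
The plan is to reduce the statement to Corollary \ref{spans} via the standard matroidal equivalence: $I$ is independent in $M^*_{ts}(f)$ iff $E \setminus I$ is a spanning set of $M_{ts}(f)$, iff $i \in \Sp(E \setminus I)$ for every $i \in I$. So the substance of the proof is just applying Corollary \ref{spans} pointwise.

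First I would verify (or just cite, since $M_{ts}(f)$ is assumed to be a matroid and hence satisfies (IM)) the chain of equivalences $I \in \Ical(M^*_{ts}(f)) \iff E \setminus I \text{ contains a base of } M_{ts}(f) \iff \Sp(E \setminus I) = E \iff (\forall i \in I)\, i \in \Sp(E \setminus I)$. The forward direction is automatic: if $B \subseteq E \setminus I$ is a base of $M_{ts}(f)$, then $\Sp(B) = E$ and hence $\Sp(E \setminus I) = E$. For the reverse direction, use (IM) to pick a maximal independent subset $J$ of $E \setminus I$; every $e \in E$ satisfies $e \in \Sp(J)$ (elements of $E \setminus I$ by maximality of $J$, elements of $I$ by the hypothesis $\Sp(E \setminus I) = E$ and idempotence of $\Sp$), and then (I3) combined with this closure condition forces $J$ to be a base.

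Second, I would apply Corollary \ref{spans} with $E' = E \setminus I$ and $e = i$, for each $i \in I$. Since $i \in I$ we indeed have $i \not\in E \setminus I$, so the corollary yields: $i \in \Sp(E \setminus I)$ iff there is a thin dependence $d_i$ of $f$ with $\supp(d_i) \subseteq (E \setminus I) \cup \{i\}$ and $d_i(i) = 1$. The support condition $\supp(d_i) \subseteq (E \setminus I) \cup \{i\}$ is literally the statement that $d_i$ vanishes on $I \setminus \{i\}$, i.e.\ on ``the rest of $I$''. Combining these two steps gives the desired biconditional.

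The main obstacle is essentially none, as this is a direct combination of the preceding corollary with the coindependent-equals-cospanning-complement characterisation. The only point requiring care is making explicit the use of (IM) when turning ``$E \setminus I$ is spanning'' into ``$E \setminus I$ contains a base'' in the infinite setting; once that is observed, the argument is immediate.
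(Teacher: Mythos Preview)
Your proposal is correct and follows essentially the same route as the paper: the paper's proof simply recalls that $I$ is independent in $M^*_{ts}(f)$ iff $\Sp(E\setminus I)=E$, and then applies Corollary~\ref{spans}. Your argument expands the first step (spelling out the use of (IM) to pass from spanning to containing a base), but the strategy and the key lemma are identical.
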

\begin{proof}
We recall that $I$ is independent in $M^*_{ts}(f)$ iff $\Sp(I^c)=E$. Now apply Lemma \ref{spans}.
\end{proof}

This motivates the definition we promised at the start of this section, of the set system $M_{ts}^{\co}$. 

\begin{dfn}
A subset $I$ of $E$ is {\em coindependent} iff for every $i \in I$ there is a thin dependence $d_i$ of $f$ such that $d_i(i) = 1$ and $d_i$ is 0 on the rest of $I$. The set system $M_{ts}^{\co}(f)$ has ground set $E$ and consists of the coindependent subsets of $E$. 
\end{dfn}

Thus by Corollary \ref{coind}, when $M_{ts}(f)$ is a matroid, $M_{ts}^{\co}(f)=M^*_{ts}(f)$.

\begin{lem}\label{coindplus}
Let $I$ be coindependent and $i_0\not\in I$. If there is a thin dependence $d$ which is nonzero at $i_0$ and $0$ on $I$, then $I + i_0$ is coindependent.
\end{lem}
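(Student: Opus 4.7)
The plan is to verify the defining condition of coindependence for each element of $I + i_0$ separately, by producing the required thin dependences from $d$ and the witnesses already guaranteed for $I$.

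First I would rescale $d$ to obtain a thin dependence $d^\ast$ with $d^\ast(i_0) = 1$ and $d^\ast \equiv 0$ on $I$ (this is allowed since $d(i_0) \neq 0$, and scaling a thin dependence by a nonzero element of $k$ yields a thin dependence). This $d^\ast$ already serves as the witness demanded by the definition of coindependence for the element $i_0 \in I + i_0$.

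For each $j \in I$, coindependence of $I$ gives a thin dependence $d_j$ with $d_j(j) = 1$ and $d_j$ vanishing on $I - j$. The only possible defect of $d_j$ as a witness for $j$ inside $I + i_0$ is that $d_j(i_0)$ may be nonzero, so I would eliminate this by setting
\[
d_j' := d_j - d_j(i_0)\cdot d^\ast.
\]
Then $d_j'(i_0) = d_j(i_0) - d_j(i_0)\cdot 1 = 0$; at $j$ we get $d_j'(j) = 1 - d_j(i_0)\cdot 0 = 1$ because $j \in I$ and $d^\ast$ vanishes on $I$; and on the rest of $I$ both $d_j$ and $d^\ast$ vanish, so $d_j'$ does too. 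Hence $d_j'$ witnesses the coindependence condition for $j$ inside $I + i_0$.

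The only nontrivial thing to check — and it is the mildest of obstacles — is that $d_j'$ is really a thin dependence. This is just the observation that $k$-linear combinations of thin dependences of $f$ are thin dependences: for each $a \in A$ both $\sum_e d_j(e) f(e)(a)$ and $\sum_e d^\ast(e) f(e)(a)$ are finite sums vanishing, so the same is true for $\sum_e d_j'(e) f(e)(a)$. With that in hand, the lemma follows immediately.
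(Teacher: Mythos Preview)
Your proof is correct and is essentially identical to the paper's: both rescale $d$ to get the witness $d'_{i_0}=d/d(i_0)$ for $i_0$, and then correct each existing witness $d_j$ by subtracting $d_j(i_0)\,d'_{i_0}$. The only difference is that you spell out the verification that the resulting functions are thin dependences and satisfy the required values, which the paper leaves implicit.
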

\begin{proof}
Suppose that $(d_i |i\in I)$ witnesses the coindependence of $I$. Let $d'_{i_0}=d/d(i_0)$, and for $i\in I$ let $d'_i=d_i-d_i(i_0)d'_{i_0}$. Then $(d'_i | i\in I + i_0)$ witnesses the coindependence of $I + i_0$.
\end{proof}

We can now show that $M_{ts}^{co}(f)$ is always dual, in a sense, to $M_{ts}(f)$.

\begin{lem}\label{bases}
Let $I \subseteq E$. $I$ is a maximal independent set with respect to $f$ iff $E \setminus I$ is a maximal coindependent set with respect to $f$. 
\end{lem}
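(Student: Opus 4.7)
The plan is to prove each implication in turn using the tools the paper has just developed, chiefly Lemma \ref{coindplus} and the very definition of coindependence.

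For the forward direction, assume $I$ is a maximal independent set. To verify that $J := E\setminus I$ is coindependent, pick $j\in J$. By maximality, $I + j$ is dependent, so there is a nonzero thin dependence $d$ with $\supp(d)\subseteq I+j$. Since $I$ itself is independent, $d(j)\neq 0$, and after scaling we get a witness $d_j$ with $d_j(j)=1$ and $d_j$ zero on the rest of $J$ (in fact zero outside $I+j$). To see $J$ is \emph{maximally} coindependent, suppose some $i_0\in I$ could be added; the associated witness $d_{i_0}$ would then be a nonzero thin dependence that vanishes on $J$, hence is supported in $I$, contradicting the independence of $I$.

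For the reverse direction, assume $J = E\setminus I$ is maximal coindependent. First, $I$ must be independent: otherwise there is a nonzero thin dependence $d$ with $\supp(d)\subseteq I$. Choose $i_0\in \supp(d)$; since $i_0\notin J$ and $d$ is nonzero at $i_0$ and zero on $J$, Lemma \ref{coindplus} says $J + i_0$ is coindependent, contradicting the maximality of $J$. Next, $I$ is maximal: if $I+j$ were independent for some $j\in J$, then the witness $d_j$ from the coindependence of $J$ is a nonzero thin dependence with $d_j(j)=1$ and support contained in $I+j$, showing $I+j$ is dependent — contradiction.

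The argument is essentially bookkeeping once the two sides of the equivalence are unwound; the only place where a nontrivial earlier result is needed is Lemma \ref{coindplus} in the reverse direction (to promote a single thin dependence supported in $I$ into an enlarged coindependent set). No appeal to Lemma \ref{mindep} or to $M_{ts}(f)$ actually being a matroid is necessary, which is exactly why this lemma can serve as a foundation for later proving that $M_{ts}^{\co}(f)$ is ``dual'' to $M_{ts}(f)$ in general. The mild subtlety to watch is keeping straight which set plays the role of ``$I$'' in Lemma \ref{coindplus} (namely $J$, not $I$) and verifying the hypothesis $i_0\notin J$, which follows from $i_0\in I$.
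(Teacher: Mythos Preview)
Your proof is correct and follows essentially the same approach as the paper's own proof: both directions use the definition of coindependence directly, with Lemma \ref{coindplus} invoked in the reverse direction to derive a contradiction from a thin dependence supported in $I$. The organisation and level of detail differ only cosmetically.
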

\begin{proof}
Suppose first of all that $I$ is a maximal independent set, and let $i \in E \setminus I$. Let $d_i$ witnesses the dependence of $I \cup \{i\}$. We must have $d_i(i) \neq 0$, so without loss of generality $d_i(i) = 1$. But then the $d_i$ witness the coindependence of $E \setminus I$. We can't have $(E \setminus I)  + i$ coindependent for any $i \in I$, since the corresponding $d_i$ would witness dependence of $I$.

So suppose instead for a contradiction that $E \setminus I$ is a maximal coindependent set but $I$ is dependent, as witnessed by some thin dependence $d$ of $I$. There must be $i_0 \in I$ with $d(i_0) \neq 0$ so, by Lemma \ref{coindplus}, $(E \setminus I) + i_0$ is coindependent, contradicting the maximality of $E \setminus I$. Thus $I$ is independent. For each $i \in E \setminus I$, $I \cup \{i\}$ is dependent, as witnessed by $d_i$, and so $I$ is also maximal.
\end{proof}

$M_{ts}^{\co}(f)$ evidently satisfies (I1) and (I2). 

\begin{lem}\label{(I3)}
$M_C^{\co}$ satisfies (I3).
\end{lem}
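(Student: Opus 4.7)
My plan is to reduce the verification of (I3) to Lemma \ref{coindplus} by constructing a suitable thin dependence. Recall that Lemma \ref{coindplus} says: if $J$ is coindependent and there is a thin dependence nonzero at some element $x \notin J$ and zero on $J$, then $J + x$ is coindependent. So, given $I \in \Ical \setminus \Ical^{max}$ and $I' \in \Ical^{max}$, it suffices to produce a thin dependence $d$ of $f$ with $d$ vanishing on $I$ and $\supp(d) \cap (I' \setminus I) \neq \emptyset$; then any $x$ in this intersection provides the required extension of $I$.

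The first step uses the non-maximality of $I$: since $I$ is coindependent but not maximal, there is some $y \in E \setminus I$ such that $I + y$ is coindependent (using (I2), which has already been observed for $M_{ts}^{\co}(f)$). The coindependence witness associated to $y$ in $I + y$ is then a thin dependence $d$ of $f$ with $d(y) = 1$ and $d$ identically zero on $I$.

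The second step, which I regard as the heart of the argument, is to show that $\supp(d)$ must meet $I'$. By Lemma \ref{bases}, $B' := E \setminus I'$ is a maximal thinly independent set in $M_{ts}(f)$, so by definition of thin independence no nonzero thin dependence of $f$ can be supported inside $B'$. Since $d$ is nonzero, $\supp(d) \not\subseteq B'$, and we can pick $x \in \supp(d) \cap I'$; because $d$ vanishes on $I$, automatically $x \in I' \setminus I$. Applying Lemma \ref{coindplus} to $d$ and $x$ then yields that $I + x$ is coindependent, as required. I do not expect any genuine obstacle beyond recognising that Lemma \ref{bases} is the appropriate tool for translating the maximality of $I'$ in $M_{ts}^{\co}(f)$ into a constraint on where thin dependences of $f$ can be supported.
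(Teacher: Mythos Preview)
Your proof is correct and follows essentially the same approach as the paper's own proof: pick an extension $y$ of the nonmaximal $I$ witnessing coindependence via a thin dependence $d$, use Lemma \ref{bases} to see that $E \setminus I'$ is (maximally) independent so that $\supp(d)$ must meet $I'$, and apply Lemma \ref{coindplus} to the resulting element $x \in I' \setminus I$. The paper's argument is the same up to notation (it uses $J$, $i_0$, $d_{i_0}$, $i_1$ where you use $I'$, $y$, $d$, $x$).
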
 
\begin{proof}
Suppose we have a maximal coindependent set $J$, and a nonmaximal coindependent set $I$. We have to show that we may extend $I$ with a point from $J$. Since $I$ is nonmaximal, we can choose $i_0 \not \in I$ with $I + i_0$ still coindependent. Since by Lemma \ref{bases} $E \setminus J$ is  independent, there is $i_1 \in J$ with $d_{i_0}(i_1) \neq 0$. Then by Lemma \ref{coindplus} $I  + i_1$ is coindependent.
\end{proof}

We can now give our slightly simplified criterion for when a thin sums system is a matroid.

\begin{thm}\label{comat}
If $M_{ts}^{\co}(f)$ satisfies (IM), then
$$(M_{ts}^{co}(f))^*=M_{ts}(f).$$
In particular, $M_{ts}(f)$ is a matroid.
\end{thm}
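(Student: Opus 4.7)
The plan is to first check that $M_{ts}^{\co}(f)$ itself is a matroid under the hypothesis, and then identify its dual with $M_{ts}(f)$ on the level of independent sets. The hypothesis (IM), combined with (I1), (I2) (immediate from the definition) and (I3) (Lemma \ref{(I3)}), makes $M_{ts}^{\co}(f)$ a matroid, so $(M_{ts}^{\co}(f))^*$ is a matroid too. By Lemma \ref{bases}, the bases of $M_{ts}^{\co}(f)$ are precisely the complements of the maximal $f$-independent subsets of $E$, so the bases of $(M_{ts}^{\co}(f))^*$ are the maximal $f$-independent sets themselves, and its independent sets are exactly the subsets of these.

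Since $f$-independence is closed under taking subsets, every independent set of $(M_{ts}^{\co}(f))^*$ is $f$-independent. So the real task is the reverse inclusion: every $f$-independent set $I$ is contained in some maximal $f$-independent set, or equivalently, there is a base $B^*$ of $M_{ts}^{\co}(f)$ with $B^* \subseteq E\setminus I$. To produce $B^*$, I would apply (IM) of $M_{ts}^{\co}(f)$ to the interval $\emptyset \subseteq E\setminus I \subseteq E$, obtaining a coindependent $J \subseteq E\setminus I$ which is maximal among coindependent subsets of $E\setminus I$.

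The main step, and the main obstacle, is to promote this local maximality of $J$ inside $[\emptyset, E\setminus I]$ to global maximality, i.e., to show that $J$ really is a base of $M_{ts}^{\co}(f)$; once this is done, $E\setminus J$ is a maximal $f$-independent set containing $I$ by Lemma \ref{bases}. Suppose for contradiction that $J$ is not a base, so there is some $j \notin J$ with $J + j$ coindependent. Maximality of $J$ in the interval forces $j \in I$ (otherwise $J + j$ would still lie in $[\emptyset, E\setminus I]$). By the definition of coindependence, there is a thin dependence $d$ with $d(j) = 1$ and $d$ vanishing on $J$.

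Now comes the key observation that makes $f$-independence of $I$ do its work. For any $e \in (E\setminus I)\setminus J$, the maximality of $J$ in $[\emptyset, E\setminus I]$ forbids $J + e$ from being coindependent, so the contrapositive of Lemma \ref{coindplus} tells us that every thin dependence vanishing on $J$ must also vanish at $e$. Applying this to our $d$ gives $\supp(d) \cap ((E\setminus I)\setminus J) = \emptyset$; combined with $\supp(d) \cap J = \emptyset$, we conclude $\supp(d) \subseteq I$. Then $d$ is a nonzero thin dependence supported on $I$, contradicting the $f$-independence of $I$. Hence $J$ is a base of $M_{ts}^{\co}(f)$, and the proof is complete.
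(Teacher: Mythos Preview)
Your proof is correct and follows essentially the same approach as the paper's: both apply (IM) to get a maximal coindependent $J$ inside $E\setminus I$, then argue by contradiction using Lemma~\ref{coindplus}. The only cosmetic difference is that the paper applies Lemma~\ref{coindplus} directly (finding a single $j\notin I$ in $\supp(d)$ and deducing $J+j$ is coindependent), whereas you take the contrapositive for every $e\in (E\setminus I)\setminus J$ to force $\supp(d)\subseteq I$; these are two phrasings of the same contradiction.
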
 
\begin{proof}
$M_{ts}^{co}(f)$ evidently satisfies (I1) and (I2), and satisfies (I3) by Lemma \ref{(I3)}, so it is a matroid. It is clear from Lemma \ref{bases}, that every independent set of $(M_{ts}^{co}(f))^*$ is also independent $M_{ts}(f)$. Conversely, let $I$ be an independent set of $M_{ts}(f)$. Then let $J$ be a maximal independent set of $M_{ts}^{\co}(f)$ not meeting $I$. It suffices to show that $J$ is a base of $M_{ts}^{\co}(f)$. Suppose not, for a contradiction: then there is some $i \in I$ with $J + i$ coindependent. But then since $I$ is independent, the corresponding $d_i$ is nonzero at some $j \not \in I$, and by Lemma \ref{coindplus} we deduce that $J + j$ is coindependent,  contradicting the maximality of $J$.
\end{proof}

We now return to the question of when the algebraic cycle system $M_A(G)$ of a graph $G$ is a matroid. It evidently satisfies (I1) and (I2). A little trickery shows that $M_A(G)$ has a maximal independent set $B$. First, we pick a maximal collection $A$ of disjoint rays in $G$, then we can take $B$ to be any maximal set of edges including all the rays in $A$ but not including any cycle and not connecting any 2 of the rays in $A$ (both these steps are possible by Zorn's Lemma). $B$ can't include a double ray, by maximality of $A$. A slight refinement of this argument shows that $M_A(G)$ always satisfies (IM). So we just need to determine whether $M_A(G)$ satisfies (I3). 

In fact, as we mentioned in Section \ref{pre}, it was shown by Higgs in \cite{Higgs:axioms} that $M_A(G)$ is a matroid iff $G$ doesn't contain any subdivision of the Bean graph:
$$\xymatrix{&&&& v \ar@{--}[d] \ar@{-}[dr] \ar@{-}[drr] \ar@{-}[drrr] \ar@{}[drrrr]|{\cdots} &&&&\\ \cdots & \ar@{-->}[l] \bullet \ar@{--}[r] & \bullet \ar@{--}[r] & v' \ar@{-}[r] & \bullet \ar@{--}[r] & \bullet \ar@{--}[r] & \bullet \ar@{--}[r] & \bullet \ar@{-->}[r]& \cdots \\}$$
The algebraic cycle system of this graph doesn't satisfy (I3) - the dashed edges above form a maximal independent set, but there is no way to extend the nonmaximal independent set consisting of the edges meeting $v$ and those to the left of $v'$ by an edge from this set. It is, however, not at all easy to see that if $G$ doesn't contain a subdivision of the Bean graph then $M_A(G)$ satisfies (I3). In fact, Higgs didn't follow this route - the interested reader can check that his claim (3) (which is the combinatorial heart of the paper) is exactly the criterion obtained from Theorem \ref{comat} in this case. We are now in a position to give a more direct argument.

\begin{thm}[Higgs]\label{higgs}
Suppose that $G$ contains no subdivision of the Bean graph. Then $M_A(G)$ is a matroid.
\end{thm}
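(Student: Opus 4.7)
The plan is to apply Theorem \ref{comat} to the thin sums representation $M_A(G)=M_{ts}(f)$ supplied by Proposition \ref{algthin}, where $f$ is the signed vertex-incidence family defined there. The system $M_{ts}^{\co}(f)$ satisfies (I1) and (I2) trivially and satisfies (I3) by Lemma \ref{(I3)}, so by Theorem \ref{comat} it suffices to verify (IM) for $M_{ts}^{\co}(f)$.

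Before addressing (IM), I reinterpret coindependence graph-theoretically. By Lemma \ref{mindep}, the support of any thin dependence of $f$ is a union of minimal dependent sets of $M_{ts}(f)$; these minimal dependent sets, as the proof of Proposition \ref{algthin} shows, are precisely the edge sets of cycles and of double rays of $G$. Hence $I\subseteq E(G)$ is coindependent iff for every $e\in I$ there is a cycle or double ray of $G$ containing $e$ and disjoint from $I\setminus\{e\}$.

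To verify (IM), fix $I\subseteq X\subseteq E(G)$ with $I$ coindependent and invoke Zorn's lemma; the essential point is that the union $I^*:=\bigcup_\alpha I_\alpha$ of any chain of coindependent subsets of $X$ containing $I$ is itself coindependent. Suppose for contradiction that it is not, as witnessed by some $e\in I^*$: every cycle or double ray of $G$ through $e$ meets $I^*\setminus\{e\}$. Using the coindependence of the $I_\alpha$, I recursively build a strictly increasing sequence of chain-indices $\alpha_0<\alpha_1<\cdots$ and cycles or double rays $C_n$ through $e$ so that $e\in I_{\alpha_n}$, $C_n$ witnesses the coindependence of $I_{\alpha_n}$ at $e$ (and therefore avoids $I_{\alpha_n}\setminus\{e\}$, in particular the previously chosen $e_1,\ldots,e_n$), while $C_n$ meets $I^*\setminus I_{\alpha_n}$ at some new edge $e_{n+1}$, with $\alpha_{n+1}$ chosen so that $e_{n+1}\in I_{\alpha_{n+1}}$.

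The main obstacle is extracting a subdivision of the Bean graph from the resulting configuration $(C_n,e_n)_{n\in\Nbb}$. Since each $C_n$ passes through both endpoints of $e$, a pigeonhole argument selects a fixed endpoint $v$ of $e$ from which infinitely many essentially disjoint arcs of the $C_n$ emerge; a standard pruning argument in the style of topological infinite graph theory then isolates a ray or double ray serving as the backbone of a Bean subdivision, together with infinitely many internally disjoint paths from $v$ to vertices on this backbone, with the role of the backbone-vertices played (up to refinement) by endpoints of the edges $e_n$. This subdivision of the Bean graph in $G$ contradicts the hypothesis, so $I^*$ is coindependent, Zorn's lemma delivers the required maximal element, and (IM) is verified.
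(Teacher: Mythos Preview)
Your reinterpretation of coindependence for the vertex-incidence family $f$ is correct, and the reduction via Theorem \ref{comat} is sound. The fatal gap is in the verification of (IM): the claim that a union of a chain of coindependent sets is again coindependent is simply false, even under the hypothesis that $G$ contains no subdivision of the Bean graph.

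Consider $G$ with vertices $u,v,w_1,w_2,\ldots$, the edge $e=uv$, and for each $i$ the edges $uw_i$ and $w_iv$; set $e_i=uw_i$. This graph has no ray, hence certainly no Bean subdivision. Each finite set $\{e,e_1,\ldots,e_n\}$ is coindependent (witness $e$ by the cycle $e\cup uw_{n+1}v$, and $e_i$ by the $4$-cycle $uw_ivw_{n+1}u$), yet the union $\{e,e_1,e_2,\ldots\}$ is not: every cycle through $e$ has the form $e\cup uw_jv$ and therefore contains some $e_j$. Your recursive construction produces $C_n=\{e,uw_{n+1},w_{n+1}v\}$, and the promised ``standard pruning argument'' cannot extract a ray from a rayless graph. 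So the extraction of a Bean subdivision is not a detail to be filled in; it is impossible in this situation, and your Zorn argument collapses.

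The paper avoids this trap by taking a different route. It does \emph{not} use the vertex-incidence family from Proposition \ref{algthin}; instead it indexes $f$ by the set of algebraic cycles, setting $f(e)(a)=\pm 1$ according to orientation. For this $f$ one has $M_{ts}^{\co}(f)=M_A(G)$ itself, not its putative dual: the relevant thin dependences are supplied by \emph{nibbles} (cuts with one connected rayless side), and the Bean-graph hypothesis is used precisely to show that a nibble meets every algebraic cycle finitely, via K\"onig's Lemma applied to a spanning tree of the small side. Since (IM) for $M_A(G)$ was established just before the theorem by a direct argument that does not require the Bean hypothesis, and (I3) now comes from Lemma \ref{(I3)}, the proof is complete. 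In short, the paper shifts the burden from (IM) to (I3), whereas you attempt the reverse and run into an obstruction that the hypothesis cannot remove.
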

\begin{proof}
We say a cut $b$ of $G$ is a {\em nibble} if one side (called the {\em small side}: the other side is the {\em large side}) of $b$ is connected and contains no rays. Suppose, for a contradiction, that there are a nibble $b$ and an algebraic cycle $a$ meeting $b$ infinitely often. Then $a$ must be a double ray. Let $T$ be a spanning tree of the small side of $b$. We can pick any vertex $v_0$ in this tree to serve as its root, and consider the subtree $T'$ consisting of the paths from $v_0$ to $a$ in $T$. Since $T'$ is rayless and has infinitely many leaves there must (by K\"{o}nig's Lemma) be a vertex $v$ in this tree of infinite degree. The paths from $v$ to $a$ in $T'$, together with $a$ itself, now contain a subdivision of the Bean graph, contrary to our supposition. So we can conclude that a nibble and an algebraic cycle can only meet finitely often.

In fact we can say more, using the ideas of Section \ref{pre}. Pick directions for every edge, algebraic cycle and nibble of $G$. Let $A$ be the set of all agebraic cycles of $G$, and for any edge $e \in E(G)$ define a function $A \xrightarrow {f(e)}k$ such that for any 
$a\in A$, $f(e)(a)$ is $1$ if $e\in a$ and they have the same directions, $-1$ if $e\in a$ and they have different directions, and $0$ if $e$ isn't an edge of $a$. This gives a map $E(G) \xrightarrow{f} k^A$. We shall show that $M_{ts}^{\co}(f)=M_A(G)$.

First, we show that any coindependent set $I$ for $f$ is $M_A(G)$-independent. Suppose for a contradiction that $I$ includes an algebraic cycle $a$, and pick any $i \in a$. Then $\sum_{e \in E}d_i(e)f(e)(a) = f(i)(a) \neq 0$, which is the desired contradiction.

For any nibble $b$ of $G$, define the map $E(G) \xrightarrow{d_b} k$ such that $d_b(e)$ is $1$ if $e\in b$ and they have the same directions, $-1$ if $e\in b$ and they have different directions, and $0$ if $e$ isn't an edge of $b$. For any algebraic cycle $a$, $a$ must traverse $b$ the same number of times in each direction (if it is a double ray, the rays in both directions must eventually end up in the large side of $b$). Traversals one way contribute a $+1$ term to $\sum_{e \in E} d_b(e)f(e)(a)$, and traversals the other way contribute a $-1$ term, so this sum is always 0. That is, each $d_b$ is a thin dependence of $f$. 

Now if a set $I$ isn't coindependent then there is some $i \in I$ such that no thin dependence is nonzero at $i$ and 0 on the rest of $I$. In particular, considering the thin dependences $d_b$ above, there is no nibble $b$ with $b \cap I = \{i\}$. Thus if the connected components of $I - i$ containing the endpoints of $i$ are distinct then each contains a ray, so $I$ contains a double ray. Otherwise, both ends of $i$ are in the same component, so $I$ contains a cycle. In either case, $I$ contains an algebraic cycle. 

We have shown that the $M_A(G)$-independent sets are exactly the coindependent sets, so they satisfy (I3) by Lemma \ref{(I3)}. We have already checked the remaining axioms.
\end{proof}

\begin{rem}
This argument also shows a little more - namely that the dual of $M_A(G)$ is the thin sums matroid $M_{ts}(f)$. We have shown that every nibble is thinly dependent. On the other hand, if a set $I$ contains no nibble, so that every connected component of the complement of $I$ contains a ray, then for each $i$ in $I$ there is an algebraic cycle meeting $I$ only in $i$, so $I$ is thinly independent. Thus the cycles of the dual of $M_A(G)$ are exactly the minimal nibbles. This is also shown in \cite{RD:HB:graphmatroids}, where minimal nibbles are called skew cuts.
\end{rem}

\section{Galois Connections}\label{galois}

In this section, we will present a new perspective on the definition of thin sums set systems, which we believe makes it unlikely that any criterion much simpler than (IM) will allow us to distinguish which such systems are matroids. To do this, we shall show that thin sums systems are determined by closed classes for a particular Galois connection. We shall note that each IE-operator gives a closed class for a very similar Galois connection. Since in that case (IM) seems to be necessary to pick out the class of matroids, we think something similar will be needed for thin sums systems also. This section is not essential for what follows, and may be skipped.

Since Galois connections are not widely known, we shall review here the small portion of the theory that we shall require.

\begin{dfn}
Let $A$ be a set, and $R$ a symmetric relation on $A$. The {\em Galois connection} induced by $R$ is the function $(\Pcal A \xrightarrow{p} \Pcal A)$ given by $p(A') = \{a \in A | (\forall a' \in A') aRa'\}$.
\end{dfn}

For the remainder of this section we shall always take $A$, $R$ and $p$ to refer in this way to the constituents of a general Galois connection.

\begin{example}\label{orth}
Let $V$ be a vector space with an inner product $\langle -,- \rangle$. We say 2 vectors $v$ and $w$ are {\em orthogonal} iff $\langle v,w \rangle = 0$. This gives a relation from $V$ to itself, and so induces a Galois connection as above. $p$ is given by the function $\Pcal V \to \Pcal V$ that sends a subset of $V$ to its {\em orthogonal complement}, which is always a subspace of $V$.
\end{example}

\begin{lem}\ 
\begin{itemize}
\itum For $A' \subseteq A'' \subseteq A$, $p(A'') \subseteq p(A')$
\itum For $A' \subseteq A$, $A' \subseteq p^2(A')$
\end{itemize}
\end{lem}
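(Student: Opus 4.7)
The plan is to derive both statements directly from the definition of $p$, with the symmetry of $R$ entering only in the second.

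For the first bullet, the approach is to pick an arbitrary $a \in p(A'')$ and verify that $a \in p(A')$. Unfolding the definition, membership in $p(A')$ amounts to the condition $aRa'$ for every $a' \in A'$; but every such $a'$ already lies in $A''$ by the assumed inclusion $A' \subseteq A''$, and the hypothesis $a \in p(A'')$ supplies exactly this relation for every element of $A''$. Nothing beyond this unfolding is needed, so the first bullet is essentially bookkeeping on a universal quantifier.

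For the second bullet, the plan is to pick $a \in A'$ and show $a \in p(p(A'))$, which by definition means $aRb$ for every $b \in p(A')$. Given such a $b$, unfolding $b \in p(A')$ gives $bRa'$ for all $a' \in A'$, and specialising to the particular element $a' = a$ yields $bRa$; the symmetry of $R$, which is part of the definition of the Galois connection used in this section, then converts $bRa$ to $aRb$, as required.

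There is no genuine obstacle in either part — they are both immediate consequences of the definition of $p$ together with monotonicity of the universal quantifier and the symmetry of $R$. The one thing worth flagging is that symmetry is needed only in the second bullet; in the more general Galois setting with a relation between two different sets one would instead land in $q(p(A'))$ for the companion operator $q$, but in our symmetric situation this companion coincides with $p$ itself.
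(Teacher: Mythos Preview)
Your proof is correct and follows essentially the same approach as the paper's own proof: both arguments unfold the definition of $p$, verify the first bullet by restricting the universal quantifier from $A''$ to $A'$, and obtain the second bullet by specialising $b \in p(A')$ at the element $a \in A'$ and invoking the symmetry of $R$. The only difference is that you make the use of symmetry explicit, whereas the paper leaves it implicit.
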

\begin{proof}
To prove the first property, note that for any $a \in p(A'')$, for any $a' \in A' \subseteq A''$ we have $aRa'$, so that $a \in p(A')$. To prove the second property, note that for any $a' \in A'$, for any $a \in p(A')$ we have $a'Ra$, so that $a' \in p^2(A')$.
\end{proof}

\begin{lem}\label{galprops}
For $A' \subseteq A$, the following are equivalent:
\begin{itemize}
\itum $A' = p^2(A')$.
\itum $A'$ is in the image of $p$.
\end{itemize} 
\end{lem}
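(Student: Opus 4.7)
The plan is to prove the two implications separately, using only the two properties of $p$ established in the preceding lemma (order-reversal and the inflationary bound $A' \subseteq p^2(A')$).

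The direction from $A' = p^2(A')$ to ``$A'$ is in the image of $p$'' is immediate: the equation $A' = p(p(A'))$ exhibits $A'$ as $p$ applied to the set $p(A')$, so $A'$ lies in the image of $p$.

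For the converse, I would assume $A' = p(B)$ for some $B \subseteq A$ and aim to show $A' = p^2(A')$. The inclusion $A' \subseteq p^2(A')$ is just the second bullet of the preceding lemma applied to $A'$ itself, so no work is needed there. For the reverse inclusion $p^2(A') \subseteq A'$, I would start from $B \subseteq p^2(B)$ (same bullet, applied to $B$) and apply $p$ to both sides; order-reversal then yields $p(p^2(B)) \subseteq p(B) = A'$. Rewriting the left-hand side as $p^2(p(B)) = p^2(A')$ gives exactly $p^2(A') \subseteq A'$, as required.

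There is no real obstacle here: the whole argument is a two-line manipulation, and the only thing to be careful about is keeping track of which of the two basic properties of $p$ is being invoked at each step, and recognising that the key move is applying the order-reversal property of $p$ to the inflationary inclusion $B \subseteq p^2(B)$.
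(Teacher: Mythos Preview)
Your proof is correct and is essentially identical to the paper's own argument: both directions are handled the same way, with the nontrivial one obtained by applying the order-reversing property of $p$ to the inflationary inclusion for the preimage set (your $B$, the paper's $A''$) to get $p^2(A') = p^3(B) \subseteq p(B) = A'$.
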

\begin{proof}
The first statement clearly implies the second. Suppose the second is true, and let $A' = p(A'')$. Then we have $A'' \subseteq p^2(A'')$, so $p(A'') \supseteq p^3(A'')$, that is $A' \supseteq p^2(A')$. Since we also know $A' \subseteq p^2(A')$, we have $A' = p^2(A')$ as required.
\end{proof}

In such cases, we say $A'$ is a {\em closed} subset of $A$ (with respect to this Galois connection). It is immediate from Lemma \ref{galprops} that $p$ restricts to an order reversing idempotent automorphism of the poset of closed subsets of $A$. For any closed set $A'$, $p(A')$ is called the {\em dual} closed set to $A'$.

\begin{example}
If, in Example \ref{orth}, $V$ is finite dimensional, then the closed sets for this Galois connection are precisely the subspaces of $V$.
\end{example}

\begin{example}\label{graphcon}
Let $G$ be a finite graph, and let $V$ be the free vector space $\Fbb_2^E$ over $\Fbb_2$ on the set of $E$ edges of $G$. We can identify subsets of $E$ with vectors in $V$: each subset gets identified with its characteristic function. There is a standard inner product on this space, with $\langle v, w\rangle = \sum_{e \in E} v(e)w(e)$. Then the cuts of $G$ form a subspace of $V$, which is the orthogonal complement of the subspace of $V$ generated by the cycles of $G$.  Thus the cycles and the bonds of $G$ generate dual closed classes in the associated Galois connection.
\end{example}

Let $E$ be any set, and define a relation $R_1$ from $\Pcal E$ to itself by letting $X R_1 Y$ iff $|X \cap Y| \neq 1$. This slightly odd relation is motivated by the fact that it holds between any circuit and any cocircuit in a matroid. We shall show that each matroid with ground set $E$ induces a closed subset of $\Pcal E$ in the associated Galois connection, and that the dual matroid induces the dual closed subset. In fact, we can go further and get such a result for idempotent-exchange operators (see Section \ref{pre} for a definition of this concept).

\begin{dfn}
Let $S$ be an IE-operator on a set $S$. A set $X \subseteq E$ is {\em S-closed} iff $SX = X$. A subset $X$ is an $S$-scrawl iff for each $x \in X$ it is true that $x \in S(X - x)$. The set of $S$-scrawls is denoted $\Scal(S)$. 
\end{dfn}

Thus if $M$ is a matroid then a set is $\Sp_M$-closed iff it is $M$-closed and is a $\Sp_M$-scrawl iff it is a union of $M$-circuits.

\begin{lem}\label{scrawls}
Let $S$ be an IE-operator on $E$, and let $X \subseteq E$. Then $X$ is $S$-closed iff $E \setminus X$ is an $S^*$-scrawl. 
\end{lem}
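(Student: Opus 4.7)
The plan is to reduce the equivalence to a pointwise application of the duality relation \eqref{duop}. First I would unpack both sides of the claimed equivalence into statements indexed by points of $E \setminus X$. Writing $Y = E \setminus X$, the condition that $X$ is $S$-closed amounts (using the fact that $X \subseteq SX$ holds automatically for any space) to saying that for every $x \in Y$, one has $x \notin SX$. On the other hand, $Y$ being an $S^*$-scrawl says that for every $x \in Y$, one has $x \in S^*(Y - x)$.

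Next I would invoke \eqref{duop} with the specific decomposition $E = X \dot\cup (Y - x) \dot\cup \{x\}$, which is valid whenever $x \in Y$. This gives the biconditional
\[
x \in SX \iff x \notin S^*(Y - x)\, ,
\]
or equivalently $x \notin SX \iff x \in S^*(Y - x)$. Quantifying over all $x \in Y$ then matches the two reformulated conditions exactly, yielding the equivalence of the lemma.

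The proof is essentially just bookkeeping; there is no real obstacle since the hypothesis that $S$ is an IE-operator is not even needed for this particular equivalence (only that $S$ is a space with a well-defined dual operator via \eqref{duop}). The IE-operator hypothesis is presumably there because it is the ambient setting in which the notions of scrawl and closed set are being developed, and because the companion statement obtained by swapping the roles of $S$ and $S^*$ (using $S^{**} = S$) will also be used later.
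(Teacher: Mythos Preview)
Your proof is correct and follows exactly the same route as the paper: both arguments reduce to the pointwise application of \eqref{duop} for each $x \in E \setminus X$, using that $X$ is $S$-closed iff no $x$ outside $X$ lies in $SX$, and that $E \setminus X$ is an $S^*$-scrawl iff every such $x$ lies in $S^*((E \setminus X) - x)$. Your additional remark that the full IE-operator hypothesis is not actually used here is also accurate.
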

\begin{proof}
Note that by \eqref{duop} of Section \ref{pre} for any $x \in E \setminus X$ we have $x \not \in SX$ iff  $x \in S^*(E \setminus X - x)$.
\end{proof}

\begin{coroll}\label{matscrawls}
Let $M$ be a matroid with ground set $E$, and let $s \subseteq E$ be a set which never meets an $M$-cocircuit in just one point. Then $s$ is a union of $M$-circuits. \easy
\end{coroll}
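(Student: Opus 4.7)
The plan is to deduce this as a direct consequence of Lemma \ref{scrawls}, applied to the IE-operator $\Sp_{M^*}$. Recall that $\Sp_{M^*} = \Sp_M^*$ (noted in Section \ref{pre}), and that the remark immediately following the definition of $S$-scrawl says a set is a $\Sp_M$-scrawl precisely when it is a union of $M$-circuits. So proving that $s$ is a union of $M$-circuits reduces to proving that $s$ is a $\Sp_M$-scrawl, which by Lemma \ref{scrawls} (taking $S = \Sp_{M^*}$, so that $S^* = \Sp_M$) is equivalent to showing that $E \setminus s$ is $\Sp_{M^*}$-closed.

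Next, I would unpack what $\Sp_{M^*}$-closedness of $E \setminus s$ means in concrete matroidal terms. By the definition of the closure operator associated to $M^*$, an element $x \in E$ lies in $\Sp_{M^*}(E \setminus s)$ iff either $x \in E \setminus s$, or there is an $M^*$-circuit $o$ with $x \in o \subseteq (E \setminus s) + x$. Since $M^*$-circuits are exactly $M$-cocircuits, the latter condition is that there is an $M$-cocircuit $o$ with $x \in o$ and $o \cap s \subseteq \{x\}$. Saying that $E \setminus s$ is $\Sp_{M^*}$-closed therefore amounts to saying: no element of $s$ lies in $\Sp_{M^*}(E \setminus s)$.

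The main verification then is just a one-line contradiction. Suppose some $x \in s$ did lie in $\Sp_{M^*}(E \setminus s)$. The first alternative above is ruled out since $x \in s$, so we obtain an $M$-cocircuit $o$ with $x \in o$ and $o \cap s \subseteq \{x\}$. Combined with $x \in o \cap s$ this forces $o \cap s = \{x\}$, contradicting the hypothesis that $s$ never meets an $M$-cocircuit in just one point. Hence $E \setminus s$ is $\Sp_{M^*}$-closed, so by Lemma \ref{scrawls} $s$ is a $\Sp_M$-scrawl, i.e.\ a union of $M$-circuits.

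There is not really a hard step here; the only thing one needs to be slightly careful about is keeping the duality bookkeeping straight, applying Lemma \ref{scrawls} to $\Sp_{M^*}$ rather than $\Sp_M$ so that the scrawl side of the equivalence lands in $M$ and not in $M^*$. This matches the label \emph{easy} attached to the statement.
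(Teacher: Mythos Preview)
Your proof is correct and is exactly the intended argument: the paper marks the corollary as easy precisely because it drops out of Lemma~\ref{scrawls} applied with $S = \Sp_{M^*}$, using that $\Sp_{M^*} = \Sp_M^*$ and that $\Sp_M$-scrawls are unions of $M$-circuits. Your duality bookkeeping is right, and the one-line contradiction showing $E \setminus s$ is $\Sp_{M^*}$-closed is all that is needed.
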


\begin{thm}
Let $S$ be an IE-operator on a set $E$, and let $p$ be given as above by the Galois connection associated to $R_1$. Then $\Scal(S) = p(\Scal(S^*))$.
\end{thm}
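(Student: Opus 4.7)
The plan is to unpack both sides and use the duality identity \eqref{duop} together with Lemma \ref{scrawls}. Concretely, using that $S$ is idempotent, I will exhibit a canonical $S$-closed set which detects failure of the scrawl property, and then complement it to get an $S^*$-scrawl; the relation $R_1$ (``meet in a single point'') is there precisely to witness this detection.

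First, for the inclusion $\Scal(S) \subseteq p(\Scal(S^*))$, I would take $X \in \Scal(S)$ and $Y \in \Scal(S^*)$, and suppose for contradiction that $X \cap Y = \{x\}$. Since $X$ is an $S$-scrawl, $x \in S(X - x)$; by the defining equation \eqref{duop} applied to the partition $E = (X-x) \dot\cup \{x\} \dot\cup (E \setminus X)$, this says $x \notin S^*(E \setminus X)$. But $Y - x \subseteq E \setminus X$ (because the only point of $Y$ lying in $X$ is $x$), so monotonicity of $S^*$ yields $S^*(Y - x) \subseteq S^*(E \setminus X)$, and since $Y$ is an $S^*$-scrawl we also have $x \in S^*(Y-x)$. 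This is the desired contradiction.

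For the reverse inclusion, suppose $X \notin \Scal(S)$. Pick $x \in X$ with $x \notin S(X-x)$. The key idea is to set $C := S(X-x)$; since $S$ is idempotent, $SC = S^2(X-x) = S(X-x) = C$, so $C$ is $S$-closed. Moreover $X - x \subseteq C$ and, by choice of $x$, $x \notin C$. Now let $Y := E \setminus C$. By Lemma \ref{scrawls}, $Y$ is an $S^*$-scrawl (here one uses $S^{**} = S$ so the lemma applies in this direction). By construction $x \in Y$ but $X - x \cap Y = \emptyset$, so $X \cap Y = \{x\}$. Thus $X \notin p(\Scal(S^*))$.

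The main content is quite light once the duality \eqref{duop} is in hand: the only genuine input from the IE-operator hypothesis is the idempotence of $S$, used to ensure $C = S(X-x)$ is $S$-closed (the exchange hypothesis then enters through Lemma \ref{scrawls} via $S^{**} = S$). The most likely source of confusion when writing this up is bookkeeping about which side of the equation is quantified over scrawls of $S$ versus scrawls of $S^*$, so I would be explicit at each step about which operator's scrawls are in play.
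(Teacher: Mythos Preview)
Your proof is correct and follows essentially the same route as the paper: both directions hinge on the set $Y = E \setminus S(X-x)$ being an $S^*$-scrawl (via idempotence of $S$ and Lemma~\ref{scrawls}), with the first inclusion being the easy orthogonality check. The only differences are cosmetic---you argue the first inclusion directly from \eqref{duop} where the paper quotes Lemma~\ref{scrawls}, and you phrase the second inclusion as a contrapositive---and one remark in your closing commentary is slightly off: the exchange hypothesis (idempotence of $S^*$) is not actually used anywhere, since $S^{**}=S$ holds for any space operator and Lemma~\ref{scrawls} needs no idempotence at all; only the idempotence of $S$ itself is invoked.
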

\begin{proof}
We must show that a subset $X$ of $E$ is in $\Scal(S)$ iff it is in $p(\Scal(S^*))$. 

First of all, suppose that $X \in \Scal(S)$, and pick any $X' \in \Scal(S^*)$. Suppose for a contradiction that $|X \cap X'| = 1$, and call the unique element of this set $x$. Then $x \in S(X - x)$ and so $x \in S(E \setminus X')$, which contradicts the fact that by Lemma \ref{scrawls}, $E \setminus X'$ is $S$-closed. Since $X'$ was arbitrary we get that $X \in p(\Scal (S^*))$.

Now suppose instead that $X \in p(\Scal(S^*))$. For any $x \in X$, $S(X - x)$ is $S$-closed, since $S$ is idempotent, so $E \setminus S(X - x) \in \Scal(S^*)$ by Lemma \ref{scrawls}. So $X \cap (E \setminus S(X - x))$, which is a subset of $\{x\}$, can't have just one element. So $x \not \in E \setminus S(X - x)$ and so $x \in S(X - x)$. Since $x$ was arbitrary, $X \in \Scal(S)$.
\end{proof}

Thus although every matroid corresponds to a closed class for such a Galois connection, not every such closed class corresponds to a matroid: the far more general collection of IE-operators gives rise to many such closed classes which don't come from matroids. Thus, in order to determine which closed classes for these Galois connections correspond to matroids, some condition akin to (IM) is essential.

However, there is a similar Galois connection whose closed classes capture the information behind thin sums systems. Let $E$ be a set, and $k$ a field.   We have a relation $R_2$ from $k^E$ to itself with $c R_2 d$ iff
$$\sum_{e \in E}c(e) d(e) = 0 \, .$$
Here, as usual, we take this to include the statement that the sum is well defined, i.e. that only finitely many of the summands are nonzero.

Just as in Example \ref{orth}, any closed set is necessarily a subspace of the vector space $k^E$. The link between this relation and the relation $R_1$ defined above is that, since no sum evaluating to zero can have precisely one nonzero term, if $c R_2 d$ then there can't be just 1 $e \in E$ at which both are nonzero. Explicitly, $c R_2 d \Rightarrow \supp(c)R_1 \supp(d)$.

From any closed class, we can define a corresponding set system.

\begin{dfn}
For any closed set $C$ with respect to $R_2$, we say a subset $I$ of $E$ is {\em $C$-independent} iff the only $c \in C$ which is zero outside $I$ is the 0 function. Otherwise, $I$ is {\em $C$-dependent}. The {\em thin sums system} $M_C$ corresponding to $C$ is the system of $C$-independent subsets of the ground set $E$.
\end{dfn}

We shall now show that this notion corresponds to the usual notion of a thin sums system.

\begin{prop}\label{olddef}
Suppose we have a function $E \xrightarrow{f} k^A$. Let $D$ be the set of functions $d_a \colon e \mapsto f(e)(a)$ with $a \in A$. Then $M_{ts}(f) = M_{p(D)}$.
\end{prop}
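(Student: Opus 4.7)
The plan is to show that $p(D)$ coincides with the set of all thin dependences of $f$, after which the proposition follows by unpacking the definition of $C$-independence.

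First I would compute $p(D)$ directly from the definition of the Galois connection associated with $R_2$: a function $c \in k^E$ lies in $p(D)$ iff $c\, R_2\, d_a$ for every $a \in A$. By the definition of $R_2$, this means that for every $a \in A$ the sum
\[
\sum_{e \in E} c(e)\, d_a(e) \;=\; \sum_{e \in E} c(e)\, f(e)(a)
\]
is well-defined (only finitely many nonzero summands) and equals $0$. But this is precisely the condition for $c$ to be a thin dependence of $f$. Thus
\[
p(D) \;=\; \{c \in k^E \mid c \text{ is a thin dependence of } f\}.
\]
Note that since $p(D)$ is in the image of $p$, it is automatically closed under $R_2$ by Lemma \ref{galprops}, so $M_{p(D)}$ is well-defined.

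Next, I would unpack $M_{p(D)}$-independence. By the definition, a set $I \subseteq E$ is $p(D)$-independent iff the only $c \in p(D)$ which vanishes outside $I$ is the zero function. Using the identification above, this says exactly that the only thin dependence of $f$ supported in $I$ is the zero function, which is the definition of independence in $M_{ts}(f)$. Hence the two set systems have the same independent sets, so $M_{ts}(f) = M_{p(D)}$.

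The argument is essentially a definition chase; the only thing that requires any care is keeping track of the convention that a thin dependence must have its defining sums well-defined pointwise, which matches the analogous clause built into the definition of $R_2$. There is no real obstacle here, and this is presumably why the authors place this proposition at the end of the section — its role is to justify regarding thin sums systems as exactly the systems arising from closed classes of the Galois connection induced by $R_2$.
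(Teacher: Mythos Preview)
Your proof is correct and follows essentially the same approach as the paper: both identify $p(D)$ with the set of thin dependences of $f$ via the substitution $d_a(e)=f(e)(a)$, after which the equality of the two set systems is immediate from the definitions. Your version is slightly more explicit (noting closedness of $p(D)$ and unpacking $C$-independence), but the argument is the same definition chase.
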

\begin{proof}
It is enough to show that the elements of $p(D)$ are exactly the thin dependences for $f$. But using the substitution given above, the condition that $c \in p(D)$ namely that for each $a \in A$ 
$$\sum_{e \in E}c(e)d_a(e)\, ,$$
becomes the condition that for each $a \in A$
$$\sum_{e \in E}c(e)f(e)(a)$$
which is the condition for $c$ to be a thin dependence for $f$.
\end{proof}

Because thin sums systems correspond in this way to closed classes for the Galois connection correponding to $R_2$, and a condition like (IM) seems necessary to pick out the matroids amongst the closed classes for $R_1$, it is likely that some condition akin to (IM) will also be needed to distinguish which thin sums systems are matroids. On the other hand, the evident similarity of this connection to the sort employed in example \ref{graphcon} provides another indication of why the various types of cycle and bond matroids corresponding to a graph are all thin sums systems.

\section{Tameness and duality}\label{duality}

One very natural question about the class of thin sums matroids is whether or not it is closed under matroid duality: the fact that the class of representable matroids was not closed under duality was a key motivation for introducing extensions of this class, such as the class of thin sums matroids. Sadly, the class of thin sums matroids is not closed under duality: a counterexample is given in \cite{WILD}. However, that counterexample involves a matroid with a very unusual property: it has a circuit and a cocircuit whose intersection is infinite. Matroids with this property are called {\em wild} matroids, and those in which every circuit-cocircuit intersection is finite are called {\em tame}. The main result of this section will be that the class of tame thin sums matroids is closed under duality.

This class includes all the interesting examples arising from graphs: any finitary or cofinitary matroid must be tame, and this includes the finite and topological cycle matroids as well as the bond and finite bond matroids of a given graph. We showed in the proof of Theorem \ref{higgs} that the algebraic cycle and skew cuts matroids are also tame. 

A natural strategy for showing that the dual of a thin sums matroid is again a thin sums matroid is suggested by the results of Section \ref{corep}. These results suggest that in attempting to construct the representation $E \xrightarrow{\overline{f}}k^{\overline{A}}$ of $M^*_{ts}(f)$ we should take $\overline{A}$ to be the set of all thin dependencies of $f$, and define $\overline{f}(e)(c)$ to be $c(e)$. However, this natural attack fails to work, even if $M_{ts}(f)$ is tame, as our next example shows.

\begin{example}
Let $G$ be the graph 
$$\xymatrix{&&& \bullet \ar[dll] \ar[dl] \ar[d] \ar[dr] \ar[drr] \ar[drrr] &&&&\\ \ar@{}[d]|\cdots & \bullet \ar@{<--}[d] \ar@{-->}[r] & \bullet \ar@{-->}[d] & \bullet \ar@{<--}[d] \ar@{-->}[r] & \bullet \ar@{-->}[d] & \bullet \ar@{<--}[d] \ar@{-->}[r] & \bullet \ar@{-->}[d]  &  \ar@{}[d]|\cdots \\& \bullet \ar[drrr] \ar@{<--}[r] & \bullet \ar[drr] & \bullet \ar[dr] \ar@{<--}[r] & \bullet \ar[d] & \bullet \ar[dl] \ar@{<--}[r] & \bullet \ar[dll] &\\&&&& \bullet &&& .\\}$$
We may represent the algebraic cycle matroid of $G$ as $M_{ts}(f)$ as in the proof of Proposition \ref{algthin}. Recall that for any edge $e$ of $G$ the function $V(G) \xrightarrow{f(e)} k$ is given by taking $f(e)(v)$ to be $1$ if $e$ originates from $v$, $-1$ if it terminates in $v$, and $0$ if they don't meet each other. Thus the function which takes the value $1$ on the dotted edges and 0 elsewhere is a thin dependence of $f$. So no function with support given by the skew cut consisting of the vertical dotted edges can be a thin dependence of $\overline{f}$ as given above. That is, for this matroid and this definition of $\overline{f}$, we have $M^* \neq M_{ts}(\overline{f})$. 
\end{example}

Our approach will be a little different in character, although our results will imply that the restriction of the $\overline{f}$ defined above to the set of thin dependencies whose supports are circuits {\em does} give a representation of the dual of $M_{ts}(f)$. We shall proceed by giving a self-dual characterisation of the class of tame thin sums matroids. 

\begin{lem}\label{char}
Let $M$ be a tame matroid with ground set $E$. Then $M$ is a thin sums matroid over the field $k$ iff there is for each circuit $o$ of $M$ a function $o \xrightarrow{c_o} k^*$ (here $k^*$ is the set of nonzero elements of $k$) and for each cocircuit $b$ of $M$ a function $b \xrightarrow{d_b} k^*$ such that for any circuit $o$ and cocircuit $b$ we have
\begin{equation} \label{sumeq}
\sum_{e \in o \cap b} c_o(e)d_b(e) = 0 \, .
\end{equation}
\end{lem}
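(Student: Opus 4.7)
The plan is to prove the two directions separately; the backward direction is short, while the forward direction requires some manipulation of thin dependences. For the backward direction, suppose we have $c_o$ and $d_b$ satisfying \eqref{sumeq}. I would take $A$ to be the set of cocircuits of $M$ and define $f \colon E \to k^A$ by $f(e)(b) := d_b(e)$ when $e \in b$ and $0$ otherwise, then prove $M_{ts}(f) = M$. Every $M$-circuit $o$ is dependent in $M_{ts}(f)$: extending $c_o$ by zero gives a thin dependence of $f$, since for each cocircuit $b$ the sum $\sum_{e \in E} c_o(e) f(e)(b) = \sum_{e \in o \cap b} c_o(e) d_b(e)$ is finite by tameness and vanishes by \eqref{sumeq}. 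Conversely, if $c$ is a nonzero thin dependence of $f$ supported on some $M$-independent set $I$, pick $e_0 \in \supp(c)$ and extend $I$ to a base $B$ of $M$. Then $\supp(c) \subseteq B$ meets the fundamental cocircuit $b_{e_0}$ only in $\{e_0\}$, so $\sum_{e \in E} c(e) f(e)(b_{e_0}) = c(e_0) d_{b_{e_0}}(e_0) \neq 0$, contradicting that $c$ is a thin dependence of $f$.

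For the forward direction, suppose $M = M_{ts}(f_0)$ and let me construct the required $c_o$ and $d_b$. By Lemma \ref{mindep}, each $M$-circuit $o$ is the support of some thin dependence $c_o$ of $f_0$, which I treat as a function $o \to k^*$. To define $d_b$ for a cocircuit $b$, fix some $f \in b$ and set $d_b(f) := 1$. Applying Lemma \ref{o_cap_b} to $M^*$ gives, for each $e \in b - f$, a circuit $o$ of $M$ with $o \cap b = \{e, f\}$, and I set $d_b(e) := -c_o(f)/c_o(e)$. To see this is independent of the choice of $o$: if $o, o'$ are two such circuits, the finite combination $c := c_{o'}(e) c_o - c_o(e) c_{o'}$ is a thin dependence of $f_0$ vanishing at $e$, so $\supp(c) \cap b \subseteq \{f\}$; by Lemma \ref{mindep} any circuit inside $\supp(c)$ containing $f$ would meet $b$ only at $f$, contradicting Lemma \ref{notone}. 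Hence $c(f) = 0$, which is exactly the required equality of ratios, and $d_b(e) \neq 0$ is automatic.

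The main obstacle is verifying \eqref{sumeq} for an arbitrary circuit $o^*$, whose intersection $S := o^* \cap b$ (finite by tameness) may have more than two elements. For each $e \in S \setminus \{f\}$ I pick a circuit $o_e$ with $o_e \cap b = \{e, f\}$ and form the finite thin dependence
$$c := c_{o^*} - \sum_{e \in S \setminus \{f\}} \mu_e c_{o_e}, \quad \mu_e := c_{o^*}(e)/c_{o_e}(e).$$
Then $c$ vanishes on $b \setminus \{f\}$ by construction, and a direct computation using $c_{o_e}(f) = -d_b(e)\, c_{o_e}(e)$ shows $c(f) = \sum_{e \in S} c_{o^*}(e) d_b(e)$ (whether or not $f \in S$; in the former case the $f$-term contributes via $d_b(f) = 1$). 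Lemmas \ref{mindep} and \ref{notone} once again force $c(f) = 0$, yielding \eqref{sumeq}. The recurring principle driving every step is that a thin dependence of $f_0$ meeting a cocircuit in at most one point must miss it entirely, and tameness is used only to guarantee the finite combinations above are genuinely finite.
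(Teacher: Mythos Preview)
Your proof is correct and tracks the paper's argument closely. The forward direction is virtually identical: you form the same finite combination of thin dependences vanishing on $b\setminus\{f\}$ and use Lemma~\ref{mindep} together with Lemma~\ref{notone} to force the value at $f$ to be zero (your extra verification that $d_b(e)$ is independent of the chosen circuit is harmless but unnecessary, since one may simply fix a choice of $o(e)$, as the paper does). The only noteworthy difference is in the backward direction: where the paper shows that the support of any thin dependence of $f$ must be a union of $M$-circuits by appealing to Corollary~\ref{matscrawls}, you instead extend the independent set to a base and exhibit a single fundamental cocircuit meeting the support in one point. Your route is a touch more elementary, as it avoids the scrawl machinery of Section~\ref{galois}; the paper's route yields the marginally stronger intermediate conclusion that $\supp(c)$ is a union of circuits, but this is not needed here.
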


\begin{proof}
Suppose first of all that we have such $c_o$ and $d_b$. Let $A$ be the set of cocircuits of $M$, and let $E \xrightarrow{f} k^A$ be defined by $f(e)(b) = d_b(e)$. We shall show that $M = M_{ts}(f)$, by showing that a set $I \subseteq E$ is $M$-dependent iff it is $M_{ts}(f)$-dependent. If $I$ is $M$-dependent, it includes some circuit $o$, and then the function extending $c_o$ to $E$ and taking the value 0 everywhere outside $o$ is a nontrivial thin dependence of $f$ which is 0 outside of $I$. If $I$ is $M_{ts}(f)$-dependent, then let $c$ be a nontrivial thin dependence of $f$ which is 0 outside of $I$, and let $s = \supp(c)$. Then for any $M$-cocircuit $b$ we have
$$\sum_{e \in E}c(e)d_b(e) = 0 \, .$$
The collection of those $e$ such that $c(e)d_b(e) \neq 0$ is $s \cap b$, which therefore can't have just one element. So by Corollary \ref{matscrawls} $s$ is a union of $M$-circuits. Since $s$ is nonempty, it is therefore $M$-dependent, and therefore so is $I$.

Conversely, let $M$ be given as $M_{ts}(f)$ for some $E \xrightarrow{f} k^A$. For each circuit $o$ of $M$, pick some thin dependence 
$\hat{c}_o$ of $f$ with support $o$, and let $c_o = \hat{c}_o \restric_o$. Now let $b$ be any cocircuit of $M$, 
and fix some $e_b \in b$.
By Lemma \ref{o_cap_b}, we can find for each $e \in b - e_b$ some circuit $o(e)$ of $M$ such that $o(e) \cap b = \{e_b, e\}$. 
We define the map $b \xrightarrow{d_b} k^*$ to be $1$ at $e_b$ and $-\frac{c_{o(e)}(e_b)}{c_{o(e)}(e)}$ for $e \in b-e_b$ 
(note that this choice ensures that \eqref{sumeq} holds for $b$ and each $o(e)$). 

Let $o$ be any circuit of 
$E$. It remains to show that $\sum_{e \in o \cap b} c_o(e)d_b(e) = 0 $.
Plugging in the values for $d_b(e)$, this means that we need to show
$$\hat{c}_o(e_b) - \sum_{e \in o \cap (b - e_b)} \frac{c_o(e)c_{o(e)}(e_b)}{c_{o(e)}(e)} =0 \, .$$
That is, we need $c(e_b)=0$, where 
$$c = \hat{c}_o - \sum_{e \in o \cap (b - e_b)}\frac{c_o(e)}{c_{o(e)}(e)}\hat{c}_{o(e)} \, .$$

As $c$ is a finite linear combination of thin dependences, it is again a thin dependence.
But for any $e \in b - e_b$, we have $c(e) = \hat{c}_o(e) -\frac{\hat{c}_o(e)}{c_{o(e)}(e)}c_{o(e)}(e) = 0$. 
If $c(e_b) \neq 0$, then by Lemma \ref{mindep}, there is a circuit $o$ such that $e_b \in o \subseteq \supp(c)$, which gives $o \cap b = \{e_b\}$, a contradiction. Thus $c(e_b)=0$, as desired.

\end{proof}

\begin{thm}\label{minorclose}
The class of tame thin sums matroids is closed under duality and under taking minors.
\end{thm}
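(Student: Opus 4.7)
The plan is to exploit the manifestly self-dual characterization supplied by Lemma \ref{char}. The sum equation
\[
\sum_{e \in o \cap b} c_o(e)d_b(e) = 0
\]
is completely symmetric in the roles of circuits and cocircuits, and this is exactly what makes duality almost free. Note also that the criterion only makes sense because $o \cap b$ is finite, so it is tailor-made for tame matroids.

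For duality, I would first observe that $M^*$ is tame iff $M$ is (circuits and cocircuits are swapped). Then, given $(c_o)_{o \in \Ccal(M)}$ and $(d_b)_{b \in \Ccal(M^*)}$ witnessing that $M$ is a tame thin sums matroid, I would simply define, for each circuit $o^*$ of $M^*$ (i.e.\ cocircuit of $M$), the function $c^*_{o^*} := d_{o^*}$, and for each cocircuit $b^*$ of $M^*$ (i.e.\ circuit of $M$), $d^*_{b^*} := c_{b^*}$. The equation \eqref{sumeq} for $M^*$ is then literally \eqref{sumeq} for $M$ (with $o$ and $b$ swapped), so Lemma \ref{char} applies and $M^*$ is a tame thin sums matroid.

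For minors, let $N = M / C \setminus D$ with $C, D$ disjoint subsets of $E$. By Lemma \ref{rest_cir} each $N$-circuit $o'$ extends to an $M$-circuit $o$ with $o' \subseteq o \subseteq o' \cup C$, and dualising gives the analogous statement: each $N$-cocircuit $b'$ extends to an $M$-cocircuit $b$ with $b' \subseteq b \subseteq b' \cup D$ (apply Lemma \ref{rest_cir} to $M^*$ and use $(M/C \setminus D)^* = M^*/D \setminus C$). Choose such an extension for each circuit and each cocircuit of $N$, and define $c^N_{o'} := c_o \restric_{o'}$ and $d^N_{b'} := d_b \restric_{b'}$, with $c_o$, $d_b$ the witnesses for $M$.

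The key computation, which is really the whole content, is that $o \cap b = o' \cap b'$: indeed
\[
o \cap b \;\subseteq\; (o' \cup C) \cap (b' \cup D) \;=\; (o' \cap b') \cup (o' \cap D) \cup (C \cap b') \cup (C \cap D),
\]
and the last three intersections are empty because $o', b' \subseteq E \setminus (C \cup D)$ and $C \cap D = \emptyset$, while the reverse containment is obvious. Hence the $N$-intersection is finite (so $N$ is tame) and
\[
\sum_{e \in o' \cap b'} c^N_{o'}(e) d^N_{b'}(e) \;=\; \sum_{e \in o \cap b} c_o(e) d_b(e) \;=\; 0,
\]
so Lemma \ref{char} again yields that $N$ is a tame thin sums matroid. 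The main thing one has to be careful about is just the bookkeeping of the extensions and the verification that $o \cap b = o' \cap b'$; there is no real obstacle once Lemma \ref{char} is in hand.
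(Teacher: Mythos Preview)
Your proof is correct and follows essentially the same approach as the paper: both use the self-duality of the criterion in Lemma~\ref{char} for the duality statement, and for minors both extend $N$-circuits and $N$-cocircuits to $M$-circuits and $M$-cocircuits via Lemma~\ref{rest_cir} (and its dual) and restrict the witnessing functions. You spell out the key verification $o \cap b = o' \cap b'$ and the tameness of $N$ explicitly, whereas the paper leaves these to the reader, but the argument is the same.
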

\begin{proof}
The closure under duality follows from the fact that the characterisation given in Lemma \ref{char} is self-dual. For the closure under taking minors, let $M$ be a tame thin sums matroid with functions $c_o$, $d_b$ given as in Lemma \ref{char}, and let $N = M / C \backslash D$ be a minor of $M$. For each circuit $o$ of $N$, let $\hat{o}$ be a circuit of $M$ with $o \subseteq \hat{o} \subseteq o \cup C$ (such a circuit exists by Lemma \ref{rest_cir}), and take $c_o$ to be $c_{\hat o} \restric_o$. Similarly, for each cocircuit $b$ of $N$ let $\hat b$ be a cocircuit of $M$ with $b \subseteq \hat b \subseteq b \cup D$ and let $d_b = d_{\hat b} \restric_b$. These $c_o$ and $d_b$ satisfy the conditions of Lemma \ref{char}, so that $N$ is also a thin sums matroid over $k$.
\end{proof}

\section{Overview of the connections to graphic matroids}\label{summary}

Our results on graphic matroids have been scattered through the paper. We can now make use of Proposition \ref{finthinrep} and Theorem \ref{minorclose}, and go on a short tour of the standard matroids arising from an infinite graph $G$. We shall recall why all of them are tame thin sums matroids over any field, and a couple of them are representable over any field. Since we want our results to apply to any field, we continue to work over an arbitrary fixed field $k$.

Our starting point is the most algebraic of examples, the algebraic cycle matroid, for which we gave a thin sums representation in Proposition \ref{algthin}. Applying Theorem \ref{minorclose}, we deduce that the dual $M^*_A(G)$, the skew cuts matroid of $G$, is also a thin sums matroid. We can also apply Proposition \ref{finthinrep} to deduce that the finitarisation $M_{FC}(G) = M_A(G)_{fin}$, the finite cycle matroid $M_{FC}(G)$, whose circuits are the cycles of $G$, is representable.

Applying Theorem \ref{minorclose}, its dual, the bond matroid $M_B(G)$, whose circuits are (possibly infinite) bonds, is a thin sums matroid. So by Proposition \ref{finthinrep}, the finite bond matroid $M_{FB}(G)$ is representable. Applying Theorem \ref{minorclose} one more time, we recover the fact that the topological cycle matroid $M_{C}(G)$ is a thin sums matroid. 

We could, of course, continue this process further, but it quickly becomes periodic, as sketched out in the following diagram:

$$\xymatrix{*\txt{Algebraic cycles} \ar[rr]^{\dual} \ar[dd]_{\fin} && *\txt {Skew cuts} \\&&\\ *\txt{Finite cycles \\ (representable)} \ar[rr]^{\dual} && *\txt{Bonds} \ar[dd]^{\fin} \\&&\\ *\txt{Topological cycles} \ar[dd]_{\fin} && *\txt{Finite bonds \\ (representable)} \ar[ll]^{\dual} \\&&\\ *\txt{Finite cycles \\ of FSep($G$) \\ (representable)} \ar[rr]_{\dual} && *\txt{Bonds \\ of FSep($G$)} \ar[uu]_{\fin} }$$

Here FSep($G$) is the finitely separable quotient of $G$, obtained from $G$ by identifying any two vertices which cannot be separated by removing only finitely many edges from $G$. 

It would be interesting to explore the consequences of applying a similar process in other contexts, such as the simplicial setting.

\bibliographystyle{plain}
\bibliography{literatur}

\begin{thebibliography}{1}

\bibitem{union2}
E.~Aigner-Horev, J.~Carmesin, and J.~Fr{\"o}hlich.
\newblock Infinite matroid union~{II}.
\newblock Preprint (2011).

\bibitem{RD:HB:graphmatroids}
H.~Bruhn and R.~Diestel.
\newblock Infinite matroids in graphs.
\newblock arXiv:1011.4749 [math.CO], 2010.

\bibitem{matroid_axioms}
H.~Bruhn, R.~Diestel, M.~Kriesell, R.~Pendavingh, and P.~Wollan.
\newblock Axioms for infinite matroids.
\newblock arXiv:1003.3919 [math.CO], 2010.

\bibitem{basis}
H.~Bruhn and A.~Georgakopoulos.
\newblock Bases and closed spaces with infinite sums.
\newblock Preprint 2007.

\bibitem{Higgs:axioms}
D.A. Higgs.
\newblock Infinite graphs and matroids.
\newblock Recent Progress in Combinatorics, Proceedings Third Waterloo
  Conference on Combinatorics, Academic Press, 1969, pp. 245–53.

\bibitem{WILD}
N.Bowler and J.Carmesin.
\newblock Matroids with an infinite circuit-cocircuit intersection.
\newblock In preparation.

\bibitem{source_of_IE}
J.~Wojciechowski.
\newblock Infinite matroidal version of hall's matching theorem.
\newblock {\em J. London Math Soc.}, 71:563--578, 2005.

\end{thebibliography}

\end{document}